\newtheorem{thm}{Theorem}[section]
\newtheorem*{thm*}{Theorem}
\newtheorem{prop}[thm]{Proposition}
\newtheorem{lem}[thm]{Lemma}
\newtheorem*{maintheorem}{Main Theorem}
\newcommand{\R}{\mathbb{R}}
\newcommand{\C}{\mathbb{C}}
\newcommand{\Gr}{\mathrm{Gr}_k(\C^N)}
\newcommand{\St}{\mathrm{St}_k(\C^N)}
\newcommand{\Fl}{\mathrm{Fl}}
\begin{document}


\title{Symplectic Geometry and Connectivity of Spaces of Frames}

\author{Tom Needham and Clayton Shonkwiler}

\maketitle

\begin{abstract}
	Frames provide redundant, stable representations of data which have important applications in signal processing. We introduce a connection between symplectic geometry and frame theory and show that many important classes of frames have natural symplectic descriptions. Symplectic tools seem well-adapted to addressing a number of important questions about frames; in this paper we focus on the frame homotopy conjecture posed in 2002 and recently proved by Cahill, Mixon, and Strawn, which says that the space of finite unit norm tight frames is connected. We give a simple symplectic proof of a double generalization of the frame homotopy conjecture, showing that spaces of complex frames with arbitrary prescribed norms and frame operators are connected. To spark further investigation, we also suggest a number of fundamental questions in frame theory which seem amenable to a symplectic approach.
\end{abstract}

\section{Introduction}
Speaking loosely, a frame in a Hilbert space $\mathcal{H}$ is an overcomplete basis for $\mathcal{H}$. The overcompleteness of a frame allows for both greater flexibility and greater robustness to data loss, both of which are of substantial importance in a variety of applications.

Frames have a long history in the signal processing community, having been introduced by Duffin and Schaeffer in 1952~\cite{Duffin:1952kh}, though they were relatively neglected until Daubechies, Grossmann, and Meyer's pioneering work on wavelets in the 1980s~\cite{Daubechies:1986kk}. In the 21st century, an interest in \emph{finite frames} (when $\mathcal{H} = \R^N$ or $\C^N$) led to an explosion of theoretical work and new applications; see~\cite{Kovacevic:2007fja,Kovacevic:2007dba,Casazza:2013bn} for an introduction.

While much of the modern work on finite frames leverages the algebraic geometry of frame varieties~\cite{Cahill:2013fy}, we know of no previous attempts to use symplectic geometry to study frames: the goal of this paper is to show that spaces of complex frames are closely related to nice symplectic manifolds, where some very powerful results from symplectic geometry apply. As an application, we prove a double generalization of the \emph{frame homotopy conjecture} -- which says that the space of unit-norm tight frames is path-connected -- but this application of the symplectic machinery only scratches the surface of a potentially fruitful connection between frame theory and symplectic geometry, so we hope that this paper will inspire others to explore this connection further. 

In order to describe the setting and our results, we recall some definitions. Our results are solely about frames in finite-dimensional complex vector spaces, so we state our definitions in that setting.

A \emph{frame} in $\C^k$ is a collection $F=\{f_j\}_{j=1}^N$ of vectors $f_j \in \C^k$ satisfying 
\begin{equation}\label{eq:tight}
a \|v \|^2 \leq \sum_{j=1}^N |\left<v,f_j\right>|^2 \leq b \|v\|^2 \quad \forall \; v \in \C^k
\end{equation}
for some numbers $0 < a \leq b$ called \emph{frame bounds}. Throughout the paper, we use $\left<\cdot,\cdot\right>$ to denote the standard Hermitian product on $\C^k$ for any $k$ and $\|\cdot\|$ its induced norm. Let $\mathcal{F}^{N,k}$ denote the space of frames of $N$ vectors in $\C^k$. Since the parameters $N$ and $k$ will for the most part be fixed throughout the paper, we typically shorten our notation to $\mathcal{F}=\mathcal{F}^{N,k}$. Identifying a frame $F=\{f_j\}_{j=1}^N \in \mathcal{F}$ with the $k \times N$ matrix with columns given by the vectors $f_j$ represented in the standard basis, $\mathcal{F}$ can be viewed as an open, dense subset of $\C^{k \times N}$.

When $a=b$ in \eqref{eq:tight}, the frame satisfies a scaled Parseval identity $\sum_{j=1}^N |\left<v,f_j\right>|^2 = a\|v\|^2$, and hence such frames are particularly useful in signal reconstruction problems and are called $a$-\emph{tight} (or just \emph{tight}). Tight frames are natural from the symplectic perspective, though this is most easily seen through the following alternative characterization: any frame $F$ has an associated \emph{analysis operator}
\begin{align} \label{eqn:analysis}
T_F:\C^k &\rightarrow \C^N \\
\nonumber v &\mapsto (\left<v,f_1\right>,\ldots,\left<v,f_N\right>),
\end{align}
a \emph{synthesis operator}
\begin{align*}
T_F^\ast:\C^N &\rightarrow \C^k \\
(z_1,\ldots,z_N) &\mapsto \sum_{j=1}^N z_j f_j,
\end{align*}
and a \emph{frame operator}
$$
S_F=T_F^\ast T_F: \C^k \rightarrow \C^k.
$$
Again thinking of the frame $F$ as a $k \times N$ matrix, the above operators are expressed in terms of matrix multiplication as $T_F(v) = F^\ast v$ and $T_F^\ast(v)= F v$, and the frame operator is therefore given by $S_F = F F^\ast$. It is straightforward to show that a frame $F$ is $a$-tight if and only if its frame operator satisfies $S_F=\frac{1}{a} \mathrm{Id}_k$, where $\mathrm{Id}_k$ denotes the identity map on $\C^k$. We use the notation $\mathcal{F}_S=\mathcal{F}^{N,k}_S$ to indicate the space of frames with prescribed frame operator $S: \C^k \to \C^k$; in particular, $\mathcal{F}_{\frac{1}{a}\mathrm{Id}_k}$ is the space of $a$-tight frames. In symplectic terminology (which we will define precisely below), the map $\mu_{\mathrm{U}(k)}:F \to F F^\ast$ is the momentum map of the (left) Hamiltonian $\mathrm{U}(k)$ action on $\C^{k \times N}$, so each space $\mathcal{F}_S$ -- including the $a$-tight frame space $\mathcal{F}_{\frac{1}{a}\mathrm{Id}_k}$ -- is a level set of this map. 

Another interesting class of frames are the \emph{unit-norm frames}, which have all $\|f_j\|^2 = 1$. In a signal reconstruction context these frames produce measurements of equal statistical power. The space of all unit-norm frames also has a natural symplectic description: as we will see, the map $\mu_{\mathrm{U}(1)^N}: \C^{k \times N} \to \R^N$ defined by $\mu_{\mathrm{U}(1)^N}(F) = \left(-\frac{1}{2}\|f_1\|^2, \dots , -\frac{1}{2} \|f_N\|^2\right)$ is the momentum map of the (right) Hamiltonian action of the torus  of diagonal, unitary $N \times N$ matrices on $\C^{k \times N}$. For each $\vec{r} = (r_1, \dots , r_N) \in \R^N$ with $r_j \geq 0$, the space $\mathcal{F}(\vec{r})$ of frames with $\|f_j\|^2 = r_j$ is a level set of this momentum map. For example, the space of unit-norm frames $\mathcal{F}(1,\dots , 1) = \mu_{\mathrm{U}(1)^N}^{-1}\left(-\frac{1}{2},\dots , -\frac{1}{2}\right)$.

The frames which are both tight and unit-norm are called \emph{finite unit-norm tight frames} (or FUNTFs). The interest in FUNTFs is due in part to the fact that they are optimal for signal reconstruction when each measurement has equal power in the presence of additive white Gaussian noise and erasures~\cite{Goyal:2001cd,Casazza:2003vp,Holmes:2004iv}. For example, Rupf and Massey~\cite{Rupf:1994fl} showed that, when all users have the same power, optimal signature sequences in CDMA correspond to FUNTFs.

A FUNTF $F$ must have frame bound $a=\frac{k}{N}$ since $FF^\ast = \frac{1}{a} \mathrm{Id}_k$ and $\operatorname{trace}(F F^\ast) = \operatorname{trace}(F^\ast F) = \sum_{i=1}^N \|f_i\|^2 = N$, meaning that the space of FUNTFs is exactly $\mathcal{F}_{\frac{N}{k}\mathrm{Id}_k} \cap \mathcal{F}(\vec{1})$, where $\vec{1}$ is the vector of all $1$s. Therefore, FUNTF space is the intersection
\[
	\mu_{\mathrm{U}(k)}^{-1}\left(\frac{N}{k}\mathrm{Id}_k\right) \cap \mu_{\mathrm{U}(1)^N}^{-1}\left(-\frac{1}{2},\dots , -\frac{1}{2}\right),
\]
or, equivalently, $\mu^{-1}\left(\frac{N}{k}\mathrm{Id}_k,\left(-\frac{1}{2},\dots , -\frac{1}{2}\right) \right)$, where $\mu$ is the momentum map of an induced Hamiltonian $\mathrm{U}(k) \times \mathrm{U}(1)^N$ action on $\C^{k \times N}$. More generally, each level set of $\mu$ corresponds to a space $\mathcal{F}_S(\vec{r})$ of frames with prescribed frame operator $S$ and prescribed squared frame norms $\|f_j\|^2=r_j \geq 0$.

The main message of this paper is that this symplectic perspective provides insight into the structure of these spaces, which are generally poorly understood. Even the FUNTF spaces remain fairly mysterious: they are known to be non-empty~\cite{Goyal:2001cd,Zimmermann:2001kp}, but the simplest possible question about the topology of FUNTF spaces -- namely, \emph{are they path-connected?} -- is known as the \emph{frame homotopy conjecture}. This problem was posed by Larson in a 2002 REU and it first appears in the literature in Dykema and Strawn's 2006 paper~\cite{dykema2003manifold}, in which Conjecture~7.7 states ``the space of unit norm, tight complex $N$ frames in $\C^k$ is connected for all $N,k$ with $N \geq k > 1$." The frame homotopy conjecture was proved for the particular case $N=2k$ in \cite{giol2009projections}, but the full conjecture remained open until 2017, when it was proved by Cahill, Mixon, and Strawn~\cite{cahill2017connectivity}. 

The existing proof of the frame homotopy conjecture is rather technical and, while in principle it might be generalizable to other $\mathcal{F}_S(\vec{r})$ spaces, in practice that would be a fairly daunting challenge. However, we will see that the frame homotopy conjecture follows rather easily from some classical theorems from symplectic geometry, and moreover that this strategy applies just as well to all other choices of frame operator and squared frame vector norms:

\begin{maintheorem}
For any frame operator $S$ and for any admissible vector of squared norms $\vec{r}$, the space $\mathcal{F}_S(\vec{r})$ is path-connected.
\end{maintheorem}

As mentioned above, FUNTFs provide optimal reconstructions in the context of measurements of equal power with additive white Gaussian noise. When measurements have unequal power, however, FUNTFs are not optimal. For example, Viswanath and Anantharam showed that optimal CDMA signature sequences when users have different powers correspond to tight frames with squared norms of the frame vectors proportional to user powers~\cite{Viswanath:1999hf,Casazza:2006cx}. Moreover, quoting Casazza et al.~\cite{Casazza:2011ev}, in the presence of colored noise ``a tight frame is no longer optimal and the frame operator needs to be matched to the noise covariance matrix'' (cf.~\cite{Bolcskei:2001et,Viswanath:2002bv}). Therefore, the spaces $\mathcal{F}_S(\vec{r})$ of frames with prescribed frame operator and squared frame vector norms provide optimal reconstructions in the context of inhomogeneous measurement power and/or colored noise.

We review the relevant ideas from symplectic geometry in Section~\ref{sec:symplectic geometry}, and then, to introduce these ideas in a more familiar setting, we recover the result of Cahill, Mixon and Strawn for complex FUNTFs in Section~\ref{sec:FUNTFs}. We prove the main theorem in Section~\ref{sec:general_case} and suggest some other questions in frame theory which seem accessible to symplectic techniques in Section~\ref{sec:discussion}.

\section{Basic Concepts from Symplectic Geometry} \label{sec:symplectic geometry}

A good reference for the definitions and results presented here is \cite{mcduff2017introduction}. A \emph{symplectic manifold} is a smooth, even-dimensional manifold $M$ equipped with a closed, nondegenerate $2$-form $\omega$. Let $G$ denote a Lie group which acts on $M$ and let $\mathfrak{g}$ denote its Lie algebra. Each point $\xi \in \mathfrak{g}$ determines an \emph{infinitesimal vector field} $X_\xi$ by the formula 
$$
X_\xi(p) = \left.\frac{d}{d\epsilon}\right|_{\epsilon=0} \mathrm{exp}(\epsilon \xi) \cdot p
$$
for each $p \in M$. In this expression, $\exp:\mathfrak{g} \rightarrow G$ is the exponential map, so that the quantity being differentiated on the righthand side represents the action of a Lie group element on the point $p$ for each value of $\epsilon$. Using $\mathfrak{g}^\ast$ to denote the dual of the Lie algebra $\mathfrak{g}$, a \emph{momentum map} for the action is a smooth map $\mu_G:M \rightarrow \mathfrak{g}^\ast$ satisfying 
$$
\omega_p(X,X_\xi(p)) = D_p\mu_G(X)(\xi).
$$
for each $p \in M$, $\xi \in \mathfrak{g}$ and $X \in T_p M$. The expression on the righthand side denotes the evaluation of $D_p \mu_G(X) \in T_{\mu_G(p)} \mathfrak{g}^\ast \approx \mathfrak{g}^\ast$ on the vector $\xi$. We note that some authors reverse the arguments of $\omega$ in this definition, so that our definition will differ by a sign due to skew-symmetry of $\omega$. When $G$ is abelian and $M$ is the phase space of a mechanical system, the momentum map simply records the conserved quantities guaranteed by Noether's theorem.

If the action of $G$ admits a momentum map, then the action is called \emph{Hamiltonian}. Hamiltonian actions play a special role in symplectic geometry and one important property is that they induce a quotient operation in the symplectic category. The quotient operation is referred to as a \emph{symplectic reduction} or \emph{Marsden--Weinstein--Meyer quotient} and is defined in the following classical theorem.

\begin{thm}[Marsden--Weinstein--Meyer Theorem \cite{marsden1974reduction,Meyer:1973wu}]\label{thm:marsden_weinstein}
Let $(M,\omega)$ be a symplectic manifold with a Hamiltonian action of a Lie group $G$ and let $\mu_G:M \rightarrow \mathfrak{g}^\ast$ denote the momentum map for this action. For any regular value $\xi \in \mathfrak{g}^\ast$ of $\mu_G$ which is fixed by the coadjoint action of $G$ and so that $G$ acts freely on $\mu_G^{-1}(\xi)$, the space
$$
M\sslash_\xi G := \mu_G^{-1}(\xi)/G
$$
has a natural symplectic structure $\widetilde{\omega}$ satisfying 
\begin{equation}\label{eqn:marsden-weinstein_form}
\iota^\ast \omega = \pi^\ast \widetilde{\omega},
\end{equation}
where $\iota:\mu_G^{-1}(\xi) \rightarrow M$ and $\pi:\mu_G^{-1}(\xi)\rightarrow \mu_G^{-1}(\xi)/G$ denote the inclusion and projection maps, respectively.

More generally, let $\xi \in \mathfrak{g}^\ast$ be an arbitrary regular value of $\mu_G$ and let $\mathcal{O}_\xi$ denote its coadjoint orbit. Then the space
$$
M\sslash_\xi G := \mu_G^{-1}(\mathcal{O}_\xi)/G
$$
has a natural symplectic structure satisfying the analogue of \eqref{eqn:marsden-weinstein_form}.
\end{thm}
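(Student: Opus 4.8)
The plan is to run the classical descent argument. Write $Z := \mu_G^{-1}(\xi)$ and, for $p \in M$, let $\mathcal{O}_p := G\cdot p$ denote the orbit. First I would check that $Z$ is a $G$-invariant smooth submanifold of $M$ of codimension $\dim G$: smoothness is immediate since $\xi$ is a regular value, and invariance follows from equivariance of the momentum map, since $\mu_G(g\cdot p) = \mathrm{Ad}^\ast_{g^{-1}}\mu_G(p) = \mathrm{Ad}^\ast_{g^{-1}}\xi = \xi$ when $\xi$ is coadjoint-fixed. Because $G$ acts freely on $Z$ — and properly, which is automatic here since the groups in our applications are compact — the quotient $Z/G = M\sslash_\xi G$ is a smooth manifold and $\pi : Z \to Z/G$ is a submersion exhibiting $Z$ as a principal $G$-bundle.

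The crux is a pointwise linear-algebra identity. Since $T_p(\mathcal{O}_p)$ is spanned by the infinitesimal vectors $X_\eta(p)$, $\eta \in \mathfrak{g}$, the defining property of the momentum map gives, for $p \in Z$,
\[
T_p Z \;=\; \ker D_p\mu_G \;=\; \{X \in T_p M : \omega_p(X, X_\eta(p)) = 0 \ \text{for all } \eta \in \mathfrak{g}\} \;=\; \big(T_p(\mathcal{O}_p)\big)^{\omega_p},
\]
the symplectic orthogonal complement of the orbit direction. As $Z$ is $G$-invariant we also have $T_p(\mathcal{O}_p) \subseteq T_p Z$; taking symplectic orthogonals of the display then yields $(T_p Z)^{\omega_p} = T_p(\mathcal{O}_p)$. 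Hence $T_p Z$ is coisotropic with null space exactly $T_p(\mathcal{O}_p) = \ker D_p\pi$, so $\omega_p$ restricted to $T_p Z$ descends to a well-defined and nondegenerate skew-symmetric form on $T_p Z / T_p(\mathcal{O}_p) \cong T_{\pi(p)}(Z/G)$. I expect this paragraph to be where the real work lies: it is the only place the momentum map hypothesis enters, and both the well-definedness and the nondegeneracy of the reduced form are extracted from the single identity $\ker D_p\mu_G = (T_p(\mathcal{O}_p))^{\omega_p}$; once it is in hand, the rest is bookkeeping.

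Next I would assemble these pointwise forms into a global $2$-form $\widetilde\omega$ on $Z/G$: given tangent vectors at $\pi(p)$, lift them arbitrarily to $T_p Z$ and apply $\omega_p$. Independence of the lifts at a fixed $p$ is the previous paragraph; independence of the chosen point in the fiber over $\pi(p)$ follows from $G$-invariance of $\omega$, a standard consequence of the momentum map condition (for each $\eta$, $\mathcal{L}_{X_\eta}\omega = d\,\iota_{X_\eta}\omega = 0$ because $\iota_{X_\eta}\omega = \pm\, d\langle\mu_G,\eta\rangle$). By construction $\pi^\ast\widetilde\omega = \iota^\ast\omega$, which is \eqref{eqn:marsden-weinstein_form}; smoothness of $\widetilde\omega$ is seen by pulling back along local sections of the bundle $\pi$. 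Closedness is then formal: $\pi^\ast(d\widetilde\omega) = d(\pi^\ast\widetilde\omega) = d(\iota^\ast\omega) = \iota^\ast(d\omega) = 0$, and $\pi^\ast$ is injective on forms since $\pi$ is a submersion, so $d\widetilde\omega = 0$; nondegeneracy is exactly the pointwise statement from the second paragraph.

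Finally, for the general statement involving a coadjoint orbit $\mathcal{O}_\xi$, I would deduce it from the first part via the shifting trick. Give $\mathcal{O}_\xi$ the negative of its Kirillov--Kostant--Souriau symplectic form, denote the result $\mathcal{O}_\xi^-$, and let $G$ act diagonally on the symplectic manifold $(M \times \mathcal{O}_\xi^-,\, \omega \oplus \omega_{\mathrm{KKS}}^-)$; a momentum map for this action is $(p,\nu) \mapsto \mu_G(p) - \nu$. One checks that $0 \in \mathfrak{g}^\ast$ is a regular value fixed by the coadjoint action (it is fixed trivially), so the first part of the theorem endows the reduced space at $0$ with a symplectic form. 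The $G$-equivariant map $(p,\nu) \mapsto p$ descends to a diffeomorphism from this reduced space onto $\mu_G^{-1}(\mathcal{O}_\xi)/G$, along which we transport the symplectic structure, obtaining the asserted analogue of \eqref{eqn:marsden-weinstein_form}.
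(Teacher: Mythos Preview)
The paper does not prove this theorem: it is stated as a classical result and attributed to the cited references \cite{marsden1974reduction,Meyer:1973wu}, with no argument given. There is therefore nothing to compare your proposal against.

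For what it is worth, your sketch is the standard textbook proof (essentially the one in \cite{mcduff2017introduction}): the identification $T_pZ = (T_p\mathcal{O}_p)^{\omega_p}$ from the momentum map condition, the resulting coisotropy of $Z$ with null foliation given by the $G$-orbits, the descent of $\iota^\ast\omega$ to the quotient via $G$-invariance of $\omega$, and the shifting trick for reduction at a non-fixed value. The argument is correct as written. One small caveat: in the last paragraph you should note that freeness of the $G$-action on the zero level set in $M\times\mathcal{O}_\xi^-$ follows from freeness on $\mu_G^{-1}(\mathcal{O}_\xi)$, which the paper's statement tacitly assumes in the second part as well; otherwise the quotient need only be an orbifold.
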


Our main technical tool is the following theorem of Atiyah, which is part of the famous Atiyah--Guillemin--Sternberg convexity theorem. The original theorem is \cite[Theorem 1]{atiyah1982convexity} and the statement given here is  \cite[Theorem 5.21]{da2006symplectic}.

\begin{thm}[Atiyah's Connected Level Set Theorem]\label{thm:atiyah}
Let $(M,\omega)$ be a compact connected symplectic manifold with a Hamiltonian $n$-torus action with momentum map $\mu:M \rightarrow \R^n$. Then the nonempty level sets of $\mu$ are connected.
\end{thm}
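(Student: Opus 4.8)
The plan is to prove the statement by induction on $n$, with the Morse theory of the individual components of $\mu$ as the main engine; two facts carry the argument. First, if $\xi$ generates a circle subgroup $S \subseteq T^n$, then the component $\mu^\xi := \langle \mu, \xi \rangle \colon M \to \R$ is a Morse--Bott function whose critical set is the fixed-point set of $S$: by the momentum map identity together with nondegeneracy of $\omega$, the critical points of $\mu^\xi$ are exactly the zeros of the infinitesimal field $X_\xi$, and near such a point the equivariant Darboux theorem linearizes the action, so the normal bundle to each critical submanifold splits into $2$-dimensional planes on which $S$ acts by rotation, each contributing a pair of equal-sign eigenvalues to the normal Hessian of $\mu^\xi$; hence every index and coindex of $\mu^\xi$ is even, in particular $\neq 1$. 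Second, a Morse--Bott function $f$ on a compact connected manifold all of whose critical submanifolds have index $\neq 1$ and coindex $\neq 1$ has a unique local minimum and a unique local maximum (each a connected critical submanifold), and connected (possibly empty) level, sublevel and superlevel sets. I would prove this by the usual handle bookkeeping: crossing the lowest critical value takes the sublevel set from $\emptyset$ to a disjoint union of disk bundles over the minima at that level; were there more than one, the sublevel set would be disconnected, and no later critical value could reconnect its components (an index-$0$ submanifold only adds a further disjoint piece, while an index-$\geq 2$ one is attached along a connected sphere), so $M$ itself would be disconnected; hence the minimum is unique, all sublevel sets are connected, superlevel sets likewise upon applying this to $-f$, and level sets evolve by surgeries which --- away from the global maximum, where the level set simply caps off to $\emptyset$ --- run along connected attaching and belt spheres (of dimensions one less than the index and one less than the coindex, hence both $\geq 1$) and so remain connected.

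Given these, the base case $n=1$ is immediate, since $\mu = \mu^{e_1}$ is exactly a function of the second type. For the inductive step, split $T^n \cong T^{n-1} \times S^1$, write $\mu' \colon M \to \R^{n-1}$ for the momentum map of $T^{n-1}$ and $\mu_n = \mu^{e_n}$ for that of the last circle, and fix $c = (c',c_n)$ with $\mu^{-1}(c) \neq \emptyset$. If $c_n$ is a regular value of $\mu_n$, then $\mu_n^{-1}(c_n)$ is a compact connected manifold (by the two facts applied to $\mu_n$) on which $S^1$ acts locally freely, so Theorem~\ref{thm:marsden_weinstein} produces a compact connected symplectic orbifold $M_{c_n} := \mu_n^{-1}(c_n)/S^1$ carrying a Hamiltonian $T^{n-1}$-action whose momentum map $\bar\mu'$ is induced by $\mu'$; by the inductive hypothesis (which holds equally for orbifolds) the fibers of $\bar\mu'$ are connected, and since $(\bar\mu')^{-1}(c') = \mu^{-1}(c)/S^1$ and quotienting by the connected group $S^1$ preserves connectedness, $\mu^{-1}(c)$ is connected. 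The cases $c_n = \min\mu_n$ and $c_n = \max\mu_n$ are also easy: there $\mu_n^{-1}(c_n)$ is the unique minimal, respectively maximal, critical submanifold of $\mu_n$ --- a compact connected symplectic submanifold on which $T^{n-1}$ acts Hamiltonianly --- and $\mu^{-1}(c)$ is a fiber of the restricted momentum map, hence connected by the inductive hypothesis.

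The main obstacle is the remaining possibility, where $c_n$ is one of the finitely many critical values of $\mu_n$ strictly between its extrema: then $\mu_n^{-1}(c_n)$ is singular and symplectic reduction does not apply directly. Handling these fibers is the technical heart of the argument, and I see two ways to do it. One is to carry the convexity of $\mu(M)$ along as a parallel inductive statement: knowing that $\mu'(M)$ is convex and that the fibers of the map $\mu(M) \to \mu'(M)$ are intervals severely restricts how $\mu^{-1}(c)$ could decompose, which suffices to force connectedness at the bad values too. The other is to use the local normal form for Hamiltonian torus actions near a point of $\mu_n^{-1}(c_n)$ to show that every point of such a fiber is a limit of points of fibers $\mu^{-1}(c',c_n^{(k)})$ for a sequence of regular values $c_n^{(k)} \to c_n$, so that $\mu^{-1}(c)$ is a Hausdorff limit of connected sets and hence connected. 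Either way, this is the step I expect to demand genuine care; everything else is a fairly mechanical assembly of the two facts above, Theorem~\ref{thm:marsden_weinstein}, and the inductive hypothesis.
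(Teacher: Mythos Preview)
The paper does not prove this theorem at all: Theorem~\ref{thm:atiyah} is quoted as a black-box tool, with citations to Atiyah's original paper and to da~Silva's lecture notes, and no argument is given. So there is no ``paper's own proof'' to compare your proposal against.

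That said, your sketch is a faithful outline of Atiyah's original argument. The two facts you isolate are exactly the engine of his proof: (i) each component $\mu^\xi$ is Morse--Bott with all indices and coindices even, and (ii) such a function on a compact connected manifold has a unique local maximum and minimum and connected level sets. Your inductive scheme---reduce by one circle factor, apply the inductive hypothesis to the reduced space---is also standard, and you are right that the reduced space is generally only an orbifold, so strictly speaking the induction must be set up in that category from the start. You have correctly identified the genuine technical crux: the intermediate critical values of $\mu_n$, where the level set is singular and Theorem~\ref{thm:marsden_weinstein} does not directly apply. Of your two proposed fixes, the first (proving connectivity of fibers and convexity of the image simultaneously by a joint induction) is precisely what Atiyah does; the second (a limiting argument from nearby regular fibers) can be made to work but requires care to show that the singular fiber really is the Hausdorff limit of the regular ones and not merely contained in it. Either way, your assessment that this is ``the step I expect to demand genuine care'' is accurate, and the rest of your outline is sound.
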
 

Throughout the paper we identify $(\R^n)^\ast \approx \R^n$, specifically  choosing the isomorphism to be the one induced by the standard inner product. 

\section{Finite Unit Norm Tight Frames}\label{sec:FUNTFs}

In this section we will identify the space of FUNTFs as the level set $\mu^{-1}\left(\frac{N}{k}\mathrm{Id}_k,\left(-\frac{1}{2},\dots , -\frac{1}{2}\right) \right)$ of the momentum map $\mu$ of a Hamiltonian action of $\mathrm{U}(k) \times \mathrm{U}(1)^N$ on the vector space $\C^{k \times N}$ of $k \times N$ complex matrices. In fact, in order to apply Theorem~\ref{thm:marsden_weinstein}, we will work with a subgroup $\mathrm{U}(k) \times \mathrm{U}(1)^{N-1}$ and, by slight abuse of notation, we will continue to use $\mu$ to denote the momentum map of this subgroup.

Since $\mathrm{U}(k) \times \mathrm{U}(1)^{N-1}$ is connected, showing that FUNTF space is connected is equivalent to showing that the quotient
\[
	\mu^{-1}\left(\frac{N}{k}\mathrm{Id}_k,\left(-\frac{1}{2},\dots , -\frac{1}{2}\right) \right)/\left(\mathrm{U}(k) \times \mathrm{U}(1)^{N-1}\right) = \C^{k \times N} \sslash_{\left(\frac{N}{k}\mathrm{Id}_k,\left(-\frac{1}{2},\dots , -\frac{1}{2}\right) \right)} \mathrm{U}(k) \times \mathrm{U}(1)^{N-1}
\]
is connected.

In turn, the strategy is to perform the reduction in stages:
\[
	\C^{k \times N} \sslash_{\left(\frac{N}{k}\mathrm{Id}_k,\left(-\frac{1}{2},\dots , -\frac{1}{2}\right) \right)} \mathrm{U}(k) \times \mathrm{U}(1)^{N-1} \approx \left(\C^{k \times N} \sslash_{\frac{N}{k}\mathrm{Id}_k} \mathrm{U}(k) \right)\sslash_{\left(-\frac{1}{2},\dots , -\frac{1}{2}\right)} \mathrm{U}(1)^{N-1}.
\]
As we will see, the inner symplectic quotient is the Grassmannian $\mathrm{Gr}_k(\C^N)$, which is well-known to be compact and connected, so Atiyah's theorem will imply that the level set of the momentum map for the $U(1)^{N-1}$ action on the Grassmannian is connected, and hence that the quotient is connected.

\subsection{The Symplectic Structure on $\C^{k \times N}$ and the Hamiltonian Actions}\label{sec:actions}
The matrix space $\C^{k \times N}$ has a symplectic structure associated to its standard Hermitian inner product, defined on $X_1,X_2 \in T_{F} \C^{k \times N} \approx \C^{k \times N}$ by
$$
\omega_V(X_1,X_2) = -\mathrm{Im}\left<X_1,X_2\right> = -\mathrm{Im} \; \mathrm{trace}(X_1^\ast X_2),
$$ 
where the product $\left<\cdot,\cdot\right>$ is interpreted as the Hermitian inner product on $\C^{k \cdot N}$, or equivalently the Frobenius inner product on $k \times N$ matrices. 

The group $\mathrm{U}(N)$ of $N \times N$ unitary matrices acts on $\C^{k \times N}$ by right multiplication, and hence so does the subgroup \(\mathrm{U}(1)^N\) of diagonal unitary matrices. The effect of this action on $F \in \C^{k \times N}$ is to independently change the phase of each column $f_j$ of $F$. The Lie algebra $\mathfrak{u}(1)^N \approx \R^N$ is the trivial $N$-dimensional Lie algebra, and we identify $(\mathfrak{u}(1))^\ast \approx \R^N$ via the isomorphism induced by the standard inner product.

\begin{prop}\label{prop:torus moment}
	The map
	\begin{align}\label{eqn:torus moment map}
		\nonumber \mu_{\mathrm{U}(1)^N}: \C^{k \times N} &\rightarrow \R^N  \\
		\left[ f_1 | f_2 | \cdots | f_N\right] & \mapsto \left(-\frac{1}{2}\|f_1\|^2 , -\frac{1}{2} \|f_2\|^2 , \dots , -\frac{1}{2}\|f_N\|^2\right)
	\end{align}
	is a momentum map for the $\mathrm{U}(1)^N$-action on $\C^{k \times N}$.
\end{prop}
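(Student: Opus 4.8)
The plan is to verify the defining identity $\omega_p(X,X_\xi(p)) = D_p\mu_{\mathrm{U}(1)^N}(X)(\xi)$ directly, since the $\mathrm{U}(1)^N$-action and the form $\omega_V$ are both completely explicit and everything reduces to a short computation carried out column-by-column. First I would fix coordinates on the Lie algebra: $\mathfrak{u}(1)^N$ consists of the diagonal matrices $\mathrm{diag}(i\theta_1,\dots,i\theta_N)$, which I identify with $\vec\theta = (\theta_1,\dots,\theta_N) \in \R^N$, so that $\exp(\epsilon\,\vec\theta) = \mathrm{diag}(e^{i\epsilon\theta_1},\dots,e^{i\epsilon\theta_N})$. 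Because the torus acts by right multiplication, the induced infinitesimal vector field at $F = [\,f_1 \mid \cdots \mid f_N\,]$ is
$$
X_{\vec\theta}(F) = \left.\frac{d}{d\epsilon}\right|_{\epsilon=0} F\cdot\exp(\epsilon\,\vec\theta) = [\, i\theta_1 f_1 \mid \cdots \mid i\theta_N f_N \,],
$$
i.e.\ it rescales the $j$-th column of $F$ by $i\theta_j$.

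Next I would compute both sides of the momentum map identity for an arbitrary tangent vector $X = [\,x_1 \mid \cdots \mid x_N\,] \in T_F\C^{k\times N} \approx \C^{k\times N}$. For the left-hand side, expanding the Frobenius trace column-by-column gives $\mathrm{trace}(X^\ast X_{\vec\theta}(F)) = \sum_j i\theta_j\langle f_j, x_j\rangle$, and applying $-\mathrm{Im}$ (using $\mathrm{Im}(iz) = \mathrm{Re}(z)$) yields $\omega_F(X, X_{\vec\theta}(F)) = -\sum_j \theta_j\,\mathrm{Re}\langle f_j, x_j\rangle$. For the right-hand side, the $j$-th component of $\mu_{\mathrm{U}(1)^N}$ is $F \mapsto -\tfrac12\langle f_j, f_j\rangle$, whose directional derivative in the direction $X$ is $-\tfrac12(\langle x_j, f_j\rangle + \langle f_j, x_j\rangle) = -\mathrm{Re}\langle f_j, x_j\rangle$; pairing the resulting vector in $(\mathfrak{u}(1)^N)^\ast \approx \R^N$ with $\vec\theta$ via the standard inner product gives exactly $-\sum_j \theta_j\,\mathrm{Re}\langle f_j, x_j\rangle$. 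The two expressions agree, which is the claim. I would also remark that equivariance of the momentum map is automatic here, since $\mathrm{U}(1)^N$ is abelian and its coadjoint action is trivial.

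I do not expect any genuine obstacle: the entire content is this two-line calculation. The only thing that requires care is bookkeeping with conventions — in particular the sign in the paper's definition of the momentum map (which is opposite to that used by some authors), the identification $(\mathfrak{u}(1)^N)^\ast \approx \R^N$ via the standard inner product, and the fact that the torus acts on the \emph{right}, which is precisely what forces the factor $i\theta_j$ (rather than $-i\theta_j$) in $X_{\vec\theta}(F)$. Keeping these straight is exactly what makes the signs work out so that the momentum map is $F \mapsto (-\tfrac12\|f_1\|^2,\dots,-\tfrac12\|f_N\|^2)$ rather than its negative.
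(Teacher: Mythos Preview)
Your proof is correct and takes essentially the same approach as the paper: a direct verification of the momentum map identity using the explicit form of the infinitesimal vector field and the column-by-column decomposition of the Frobenius trace. The only cosmetic difference is that the paper first observes that the torus factors act independently on the columns and then checks the identity for a single $\mathrm{U}(1)$ acting on $\C^k$, whereas you carry the sum over $j$ through the whole computation; the underlying calculation is identical.
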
 

\begin{proof}
	Since the circle factors of the torus act independently on each column, it is enough to verify that the $\mathrm{U}(1)$ action on $\C^k$ given by 
	\[
		e^{it}\cdot f := f e^{it}
	\]
	has momentum map $\mu_{\mathrm{U}(1)}: f \mapsto -\frac{1}{2}\|f\|^2$. But this is clear: the vector field corresponding to $t \in \mathfrak{u}(1) \approx \R$ is
	\[
		X_t(f) = \left.\frac{d}{d\epsilon}\right|_{\epsilon=0} f e^{i \epsilon t} = i t f,
	\]
	and hence
	\[
		\omega_v(X, X_t(f)) = -\mathrm{Im}\langle X, i t f \rangle =-t\, \mathrm{Re} \langle X, f \rangle = -t\left. \frac{d}{d\epsilon}\right|_{\epsilon=0} \mathrm{Re}\langle f + \epsilon X, f + \epsilon X \rangle = D_f \mu_{\mathrm{U}(1)}(X)(t).
	\]
\end{proof}

In turn, the group $\mathrm{U}(k)$ acts on $\C^{k \times N}$ by left multiplication and this action is also Hamiltonian, as we will verify by identifying the momentum map. The Lie algebra $\mathfrak{u}(k)$ consists of the $k\times k$ skew-Hermitian matrices, and we identify the dual $\mathfrak{u}(k)^\ast$ with the space of Hermitian matrices $\mathcal{H}(k)$ via the map
\begin{align*}
\mathcal{H}(k) &\rightarrow \mathfrak{u}(k)^\ast \\
A &\mapsto \left(B \mapsto \frac{i}{2}\mathrm{trace}(AB)\right).
\end{align*}

\begin{prop}
The map
\begin{align*}
\mu_{\mathrm{U}(k)}: \C^{k \times N} &\rightarrow \mathcal{H}(k) \approx \mathfrak{u}(k)^\ast \\
F &\mapsto FF^\ast.\
\end{align*}
is a momentum map for the $\mathrm{U}(k)$-action on $\C^{k \times N}$.
\end{prop}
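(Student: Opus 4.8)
The plan is to verify the defining relation of a momentum map directly, mirroring the structure of the proof of Proposition~\ref{prop:torus moment} but now for the left $\mathrm{U}(k)$-action. First I would compute the infinitesimal vector field: for $\xi \in \mathfrak{u}(k)$ a skew-Hermitian matrix, the action $\exp(\epsilon\xi)\cdot F = \exp(\epsilon\xi)F$ gives
\[
X_\xi(F) = \left.\frac{d}{d\epsilon}\right|_{\epsilon=0} \exp(\epsilon\xi)F = \xi F.
\]
Then for a tangent vector $X \in T_F\C^{k\times N} \approx \C^{k\times N}$, I would expand the left-hand side of the momentum map condition,
\[
\omega_V(X, X_\xi(F)) = -\mathrm{Im}\,\mathrm{trace}(X^\ast \xi F),
\]
and separately compute the right-hand side $D_F\mu_{\mathrm{U}(k)}(X)(\xi)$: since $\mu_{\mathrm{U}(k)}(F) = FF^\ast$ is quadratic, $D_F\mu_{\mathrm{U}(k)}(X) = XF^\ast + FX^\ast$, and then applying the identification $\mathcal{H}(k) \to \mathfrak{u}(k)^\ast$ pairs this with $\xi$ as $\frac{i}{2}\mathrm{trace}\big((XF^\ast + FX^\ast)\xi\big)$.

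The core of the argument is the linear-algebra identity showing these two expressions agree. I would use the trace identities $\mathrm{trace}(X^\ast\xi F) = \mathrm{trace}(FX^\ast\xi)$ (cyclicity) and the fact that for any matrix $A$, $\mathrm{trace}(A^\ast) = \overline{\mathrm{trace}(A)}$, so that $\mathrm{trace}((XF^\ast)\xi) = \mathrm{trace}((\xi^\ast FX^\ast)^\ast) = \overline{\mathrm{trace}(\xi^\ast FX^\ast)} = -\overline{\mathrm{trace}(\xi FX^\ast)}$ using $\xi^\ast = -\xi$. Combining, $\mathrm{trace}((XF^\ast + FX^\ast)\xi)$ equals $\mathrm{trace}(FX^\ast\xi) - \overline{\mathrm{trace}(FX^\ast\xi)} = 2i\,\mathrm{Im}\,\mathrm{trace}(FX^\ast\xi)$. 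Hence the right-hand side becomes $\frac{i}{2} \cdot 2i\,\mathrm{Im}\,\mathrm{trace}(FX^\ast\xi) = -\mathrm{Im}\,\mathrm{trace}(X^\ast\xi F)$, matching the left-hand side exactly.

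The only genuine obstacle is bookkeeping: keeping the factor of $\frac{i}{2}$ in the chosen identification $\mathcal{H}(k) \approx \mathfrak{u}(k)^\ast$ consistent with the sign convention for $\omega_V$, and being careful that the skew-Hermiticity of $\xi$ is what converts a trace into its imaginary part (rather than its real part, as happened in the $\mathrm{U}(1)$ case where the Lie algebra generator was purely imaginary times a real scalar). Once the trace manipulation above is carried out cleanly, the identity $D_F\mu_{\mathrm{U}(k)}(X)(\xi) = \omega_V(X, X_\xi(F))$ holds for all $F$, $X$, and $\xi$, which is precisely the statement that $\mu_{\mathrm{U}(k)}$ is a momentum map. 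I would also remark in passing that equivariance of $\mu_{\mathrm{U}(k)}$ with respect to the $\mathrm{U}(k)$-action and the coadjoint action is immediate from $\mu_{\mathrm{U}(k)}(UF) = UFF^\ast U^\ast = U\mu_{\mathrm{U}(k)}(F)U^\ast$, which confirms this is an equivariant (hence genuine) momentum map.
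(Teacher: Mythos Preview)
Your proof is correct and follows essentially the same approach as the paper's: compute the infinitesimal vector field $X_\xi(F)=\xi F$, differentiate $\mu_{\mathrm{U}(k)}$ to get $D_F\mu_{\mathrm{U}(k)}(X)=FX^\ast+XF^\ast$, and then verify the defining identity via cyclicity of the trace and skew-Hermiticity of $\xi$. The only cosmetic difference is the direction of the computation (you start from $D_F\mu_{\mathrm{U}(k)}(X)(\xi)$ and reduce to $\omega$, while the paper starts from $\omega$ and builds up to the pairing), and your additional remark on equivariance is a nice supplement the paper omits.
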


\begin{proof}
For $F \in \C^{k \times N}$, the derivative of $\mu_{\mathrm{U}(k)}$ is given by
\begin{align*}
D_F\mu_{\mathrm{U}(k)}:\C^{k \times N} &\rightarrow \mathcal{H}(k) \\
X &\mapsto  F X^\ast + X F^\ast.
\end{align*}
The infinitesimal vector field induced by $B \in \mathfrak{u}(k)$ is given by $X_B(F)=BF$ at each $F \in \C^{k \times N}$, and it follows that, for any vector $X \in T_F \C^{k \times N} \approx \C^{k \times N}$,
\begin{align}
\omega_F(X,X_B(F)) &= -\mathrm{Im}\; \mathrm{trace}\left( X^\ast BF\right) = \frac{i}{2} \mathrm{trace}\left(X^\ast BF - F^\ast B^\ast X\right) \nonumber \\
&= \frac{i}{2} \mathrm{trace}\left(FX^\ast B - X F^\ast B^\ast \right) = \frac{i}{2} \mathrm{trace}\left(FX^\ast B + X F^\ast B \right) \label{eqn:moment_map_1} \\
&= \frac{i}{2} \mathrm{trace}\left(D_F\mu_{\mathrm{U}(k)}(X) B\right) = D_F\mu_{\mathrm{U}(k)}(X)(B) \nonumber,
\end{align}
where the equalities in \eqref{eqn:moment_map_1} follow from the linearity and cyclic permutation-invariance of the trace operator and the assumption that $B$ is skew-Hermitian, respectively.
\end{proof}

The left $\mathrm{U}(k)$ and right $\mathrm{U}(1)^N$ actions commute since matrix multiplication is associative, so we can combine the above actions into a single Hamiltonian $\mathrm{U}(k) \times \mathrm{U}(1)^N$ action on $\C^{k \times N}$ with momentum map
\begin{align*}
	\mu: \C^{k \times N} & \to \mathcal{H}(k) \times \R^N \\
	F & \mapsto \left(FF^\ast, \left( -\frac{1}{2} \|f_1\|^2 , \dots , -\frac{1}{2}\|f_N\|^2\right) \right).
\end{align*}

Observe that the FUNFT space
\[
	\mathcal{F}_{\frac{N}{k}\mathrm{Id}_k}(\vec{1}) = \mu^{-1} \left(\frac{N}{k} \mathrm{Id}_k, \left(-\frac{1}{2},\dots , -\frac{1}{2}\right)\right),
\]
as desired.

However, some care is warranted: both $\mathrm{U}(k)$ and $\mathrm{U}(1)^N$ contain a circle subgroup of scalar matrices, and the actions of these two subgroups are redundant. To get a group which acts freely on the fiber -- which we need in order to apply Theorem~\ref{thm:marsden_weinstein} -- we can take the quotient
\[
	\left(\mathrm{U}(k) \times \mathrm{U}(1)^N\right)/U(1) \approx \mathrm{U}(k) \times \left(\mathrm{U}(1)^N/U(1)\right) \approx \mathrm{U}(k) \times \mathrm{U}(1)^{N-1},
\]
where an explicit isomorphism is given by thinking of elements of $U(1)^{N-1}$ as elements of $U(1)^N$ with the last diagonal entry fixed to be 1. The corresponding momentum map, which we also call $\mu$, is
\begin{align*}
	\mu: \C^{k \times N} & \to \mathcal{H}(k) \times \R^{N-1} \\
	F & \mapsto \left(FF^\ast, \left( -\frac{1}{2} \|f_1\|^2 , \dots , -\frac{1}{2}\|f_{N-1}\|^2\right)\right).
\end{align*}
Indeed, the redundancy of the full $\mathrm{U}(k) \times \mathrm{U}(1)^N$ action is revealed by the fact that
\[
	\sum_{i=1}^N -\frac{1}{2}\|f_j\|^2 = -\frac{1}{2} \operatorname{trace}(F^\ast F) = -\frac{1}{2}\operatorname{trace}(FF^\ast),
\]
so $\|f_N\|^2$ can be determined from the other $\|f_j\|^2$ when $FF^\ast$ is fixed .

It is not hard to see that the identity matrix $\mathrm{Id}_k \in \mathcal{H}(k)$ is a regular value of $\mu_{\mathrm{U}(k)}$ (we will characterize all regular values of the momentum map in Section \ref{sec:regular_values}), and likewise that $ \left( -\frac{1}{2}, \dots , -\frac{1}{2}\right) \in \R^{N-1}$ is a regular value of $\mu_{\mathrm{U}(1)^{N-1}}$. Hence, $\left(\frac{N}{k} \mathrm{Id}_k, \left(-\frac{1}{2},\dots , -\frac{1}{2}\right)\right)$ is a regular value of the product momentum map~$\mu$. Moreover, the coadjoint action of $\mathrm{U}(k)$ on $\mathcal{H}(k)$ is simply the conjugation action, which fixes $\frac{N}{k}\mathrm{Id}_k$, and the coadjoint action of $\mathrm{U}(1)^{N-1}$ on $\R^{N-1}$ is trival, so the product coadjoint action fixes the point $\left(\frac{N}{k} \mathrm{Id}_k, \left(-\frac{1}{2},\dots , -\frac{1}{2}\right)\right)$ and it is sensible to talk about taking the symplectic reduction over this point.

Since $\mathrm{U}(k) \times \mathrm{U}(1)^{N-1}$ is connected and FUNTF space is precisely $\mu^{-1} \left(\frac{N}{k} \mathrm{Id}_k, \left(-\frac{1}{2},\dots , -\frac{1}{2}\right)\right)$, the space of FUNTFs is connected if and only if the quotient
\[
	\mu^{-1} \left(\frac{N}{k} \mathrm{Id}_k, \left(-\frac{1}{2},\dots , -\frac{1}{2}\right)\right)/\left(\mathrm{U}(k) \times \mathrm{U}(1)^{N-1}\right) =: \C^{k \times N} \sslash_{\left(\frac{N}{k} \mathrm{Id}_k, \left(-\frac{1}{2},\dots , -\frac{1}{2}\right)\right)} \mathrm{U}(k) \times \mathrm{U}(1)^{N-1}
\]
is connected.

\subsection{Reduction in Stages and Grassmannians}

Rather than take the quotient all at once, it is convenient to perform reduction in stages:
\[
	\C^{k \times N} \sslash_{\left(\frac{N}{k}\mathrm{Id}_k,\left(-\frac{1}{2},\dots , -\frac{1}{2}\right) \right)} \mathrm{U}(k) \times \mathrm{U}(1)^{N-1} \approx \left(\C^{k \times N} \sslash_{\frac{N}{k}\mathrm{Id}_k} \mathrm{U}(k) \right)\sslash_{\left(-\frac{1}{2},\dots , -\frac{1}{2}\right)} \mathrm{U}(1)^{N-1}.
\]
However, the reduced space
\[
	\C^{k \times N} \sslash_{\frac{N}{k}\mathrm{Id}_k} \mathrm{U}(k) := \mu_{\mathrm{U}(k)}^{-1}\left(\frac{N}{k}\mathrm{Id}_k \right)/\mathrm{U}(k)
\]
is naturally identified with the Grassmannian $\Gr$ of $k$-dimensional linear subspaces of $\C^N$. To see this, observe that
\[
	\mu_{\mathrm{U}(k)}^{-1}\left(\frac{N}{k}\mathrm{Id}_k \right) = \left\{F \in \C^{k \times N} : F F^\ast = \frac{N}{k}\mathrm{Id}_k \right\}
\]
consists of all $k \times N$ matrices with orthogonal rows, each of norm $\sqrt{\frac{N}{k}}$. This is just a scaled copy of the Stiefel manifold $\St$ of $k$-tuples of Hermitian orthonormal vectors in $\C^N$, and the (free) left $\mathrm{U}(k)$ action corresponds to the standard action on the Stiefel manifold, meaning that the quotient is homeomorphic to $\Gr = \St/U(k)$. The Grassmannian is understood very well from a geometrical and topological perspective -- for example, it is a Riemannian symmetric space and a K\"{a}hler manifold -- and in particular it is known to be connected.

It follows from Theorem \ref{thm:marsden_weinstein} that the Grassmannian inherits a natural symplectic structure from $\C^{k \times N}$ which is compatible with the symplectic structure on $\C^{k \times N}$ in the sense of~\eqref{eqn:marsden-weinstein_form}. Moreover, the $\mathrm{U}(1)^{N-1}$ action commutes with the $\mathrm{U}(k)$ action and the momentum map~\eqref{eqn:torus moment map} is $\mathrm{U}(k)$-equivariant, so both the action and the momentum map descend to $\Gr$. The torus $\mathrm{U}(1)^{N-1}$ acts freely on the fiber over $\left(-\frac{1}{2},\dots , -\frac{1}{2}\right)$ and hence the symplectic reduction
\[
	\Gr \sslash_{\left(-\frac{1}{2},\dots , -\frac{1}{2}\right)} \mathrm{U}(1)^{N-1} \approx \C^{k \times N} \sslash_{\left(\frac{N}{k}\mathrm{Id}_k,\left(-\frac{1}{2},\dots , -\frac{1}{2}\right) \right)} \mathrm{U}(k) \times \mathrm{U}(1)^{N-1} 
\]
makes sense. 

Moreover, since the Grassmannian is connected and
\[
	\Gr \sslash_{\left(-\frac{1}{2},\dots , -\frac{1}{2}\right)} \mathrm{U}(1)^{N-1} = \mu_{\mathrm{U}(1)^{N-1}}^{-1}\left( -\frac{1}{2},\dots , -\frac{1}{2}\right)/\mathrm{U}(1)^{N-1},
\]
the reduction (and hence the space of FUNTFs) is connected if and only if the level set $\mu_{\mathrm{U}(1)^{N-1}}^{-1}\left( -\frac{1}{2},\dots , -\frac{1}{2}\right)$ is.

\subsection{The Frame Homotopy Conjecture}

We can now give a short proof of the frame homotopy conjecture, first proved by Cahill, Mixon and Strawn~\cite[Theorem 1.1]{cahill2017connectivity}.

\begin{thm}\label{thm:funtf_connected}
The space of complex FUNTFs is path-connected. 
\end{thm}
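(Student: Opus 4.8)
The plan is to assemble the pieces already developed in Section~\ref{sec:FUNTFs} and finish with a single application of Atiyah's Connected Level Set Theorem (Theorem~\ref{thm:atiyah}). We have reduced the problem to showing that the level set $\mu_{\mathrm{U}(1)^{N-1}}^{-1}\left(-\tfrac12,\dots,-\tfrac12\right)$ inside the Grassmannian $\Gr$ is connected, since connectivity of this level set is equivalent to connectivity of the symplectic reduction $\Gr \sslash_{(-1/2,\dots,-1/2)} \mathrm{U}(1)^{N-1}$, which in turn is equivalent (because the acting groups are connected and the relevant fibers are exactly FUNTF space) to connectivity of the space of complex FUNTFs.

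First I would record explicitly that the induced $\mathrm{U}(1)^{N-1}$ action on $\Gr$ is Hamiltonian with momentum map the descent of \eqref{eqn:torus moment map}: this is immediate from Theorem~\ref{thm:marsden_weinstein} together with the observation already made in the text that the torus action commutes with the $\mathrm{U}(k)$ action and that $\mu_{\mathrm{U}(1)^{N-1}}$ is $\mathrm{U}(k)$-equivariant, so it factors through the quotient $\mu_{\mathrm{U}(k)}^{-1}(\tfrac{N}{k}\mathrm{Id}_k)/\mathrm{U}(k)=\Gr$. Second, I would note that $\Gr=\mathrm{Gr}_k(\C^N)$ is a compact, connected symplectic manifold — it is a smooth projective variety, equivalently a compact Kähler manifold, hence in particular compact and connected, as already remarked. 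These two facts place us exactly in the hypotheses of Theorem~\ref{thm:atiyah}: a compact connected symplectic manifold carrying a Hamiltonian torus action.

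Third, I would observe that the level set in question is nonempty: FUNTF space is known to be nonempty for all $N\ge k\ge 1$ by \cite{Goyal:2001cd,Zimmermann:2001kp}, and a nonempty FUNTF gives, via the identifications above, a point of $\mu_{\mathrm{U}(1)^{N-1}}^{-1}\left(-\tfrac12,\dots,-\tfrac12\right)$. Atiyah's theorem then yields that this level set is connected. Pushing this back up the chain of equivalences established in Sections~3.2 and 3.1 — connected level set $\Rightarrow$ connected symplectic reduction $\Rightarrow$ connected FUNTF space, using that $\mathrm{U}(k)\times\mathrm{U}(1)^{N-1}$ is connected so that a quotient by it is connected iff the total space is — completes the proof.

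I do not expect a serious obstacle here, since the conceptual work (identifying the momentum maps, performing reduction in stages, recognizing the inner quotient as a Grassmannian, checking regularity and the coadjoint-fixed condition) has all been carried out in the preceding subsections; the proof of Theorem~\ref{thm:funtf_connected} is essentially a matter of citing Theorem~\ref{thm:atiyah} and tracking connectivity through the quotient maps. The one point that deserves a careful sentence rather than a wave of the hand is the equivalence ``total space connected $\iff$ quotient connected'' for the action of the connected group $\mathrm{U}(k)\times\mathrm{U}(1)^{N-1}$: this holds because the orbit map is an open continuous surjection and the group (hence every orbit) is connected, so a separation of the total space would descend to a separation of the quotient and conversely the preimage of a separation of the quotient separates the total space. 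With that remark in place the argument is complete.
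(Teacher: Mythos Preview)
Your approach is exactly the paper's: apply Theorem~\ref{thm:atiyah} to the induced Hamiltonian $\mathrm{U}(1)^{N-1}$ action on the compact connected Grassmannian, and then trace connectivity back through the quotient maps using that the acting group is connected. The paper's proof is precisely this, stated in two sentences.

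There is, however, one genuine gap in your write-up. The theorem asserts \emph{path}-connectedness, and your chain of equivalences only delivers connectedness of FUNTF space. You never upgrade from connected to path-connected. The paper does this explicitly: it observes that the FUNTF space $\mathcal{F}_{\frac{N}{k}\mathrm{Id}_k}(\vec{1})$ is a real algebraic subset of $\C^{k\times N}\approx\R^{2kN}$, hence locally path-connected (indeed triangulable by \L{}ojasiewicz's theorem), so connected implies path-connected. An alternative fix, available here because you have already noted that $\left(\tfrac{N}{k}\mathrm{Id}_k,\left(-\tfrac12,\dots,-\tfrac12\right)\right)$ is a regular value of $\mu$, is to remark that FUNTF space is therefore a smooth submanifold and hence locally path-connected. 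Either argument is short, but one of them needs to be said; as written, your final sentence ``completes the proof'' stops one step short of the stated conclusion.
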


\begin{proof}
	Since the Grassmannian is compact and connected, Atiyah's Theorem (Theorem \ref{thm:atiyah}) applies and implies that $\mu_{\mathrm{U}(1)^{N-1}}^{-1}\left( -\frac{1}{2},\dots , -\frac{1}{2}\right) \subset \Gr$ is connected, which we have just seen implies the space of FUNTFs is connected. Moreover, the FUNTF space $\mathcal{F}_{\frac{N}{k}}(\vec{1})$ is a real algebraic set in $\C^{k \times N} \approx \R^{2\cdot k \cdot N}$ and is therefore locally path-connected (in fact, it is triangulable by \L{}ojasiewicz's Triangulation Theorem \cite{lojasiewicz1964triangulation}) so that connectivity implies path connectivity, completing the proof.
\end{proof}

\section{The General Case}\label{sec:general_case}

In this section we extend the proof strategy of Theorem~\ref{thm:funtf_connected} to treat general spaces of frames with prescribed frame operator and vector of squared norms and thereby prove the Main Theorem. The setup is essentially the same: given a Hermitian $k \times k$ matrix $S$ and a vector $\vec{r} = (r_1, \dots , r_{N-1})$ with $r_i \geq 0$, the space $\mathcal{F}_S(\vec{r})$ of frames $F$ with frame operator $FF^\ast = S$ and squared frame norms $\|f_i\|^2=r_i$ arises as a fiber of the momentum map $\mu$ for the $\mathrm{U}(k) \times \mathrm{U}(1)^{N-1}$ action on $\C^{k \times N}$:
\[
	\mathcal{F}_S(\vec{r}) = \mu^{-1}\left(S, -\frac{1}{2}\vec{r}\right).
\]

The challenge is that $\left(S, -\frac{1}{2}\vec{r}\right) \in \mathcal{H}(k) \times \R^{N-1}$ is not fixed by the coadjoint action of $\mathrm{U}(k) \times \mathrm{U}(1)^{N-1}$. The basic issue is that, for $U \in \mathrm{U}(k)$ and $F \in \mathcal{F}_S(\vec{r})$,
\[
	\mu(UF) = \left(UFF^\ast U^\ast,\left(-\frac{1}{2}\|Uf_1\|^2,\dots , -\frac{1}{2}\|Uf_{N-1}\|^2\right)\right) = \left(U S U^\ast,-\frac{1}{2}\vec{r}\right)
\]
and $USU^\ast \neq S$ in general. This means that $\mathrm{U}(k) \times \mathrm{U}(1)^{N-1}$ does not act on $\mathcal{F}_S(\vec{r})$ and hence there is no quotient.

However, there is still a natural symplectic reduction at hand when $\left(S, -\frac{1}{2}\vec{r}\right)$ is a regular value, this time the more general reduction over a coadjoint orbit
\[
	\C^{k \times N}\sslash_{\mathcal{O}_{\left(S,-\frac{1}{2}\vec{r}\right)}} \mathrm{U}(k) \times \mathrm{U}(1)^{N-1} := \mu^{-1}\left(\mathcal{O}_{\left(S,-\frac{1}{2}\vec{r}\right)}\right)/\left(\mathrm{U}(k) \times \mathrm{U}(1)^{N-1} \right),
\]
where $\mathcal{O}_{\left(S,-\frac{1}{2}\vec{r}\right)}$ is the coadjoint orbit of $\left(S,-\frac{1}{2}\vec{r}\right) \in \mathcal{H}(k) \times \R^{N-1}$. Since the coadjoint action of the torus $\mathrm{U}(1)^{N-1}$ on $\R^{N-1}$ is trivial, this is just 
\[
	\mathcal{O}_{\left(S,-\frac{1}{2}\vec{r}\right)} = \mathcal{O}_S \times \left\{-\frac{1}{2}\vec{r} \right\},
\]
where $\mathcal{O}_S = \{USU^\ast : U \in \mathrm{U}(k)\}$ is the orbit of $S \in \mathcal{H}(k)$ under the conjugation action of $\mathrm{U}(k)$ on $\mathcal{H}(k)$. Hence,
\[
	\widetilde{\mathcal F}_S(\vec{r}) := \mu^{-1}\left(\mathcal{O}_{\left(S,-\frac{1}{2}\vec{r}\right)}\right) = \{F \in \C^{k \times N} : FF^\ast = USU^\ast \text{ for some } U \in \mathrm{U}(k) \text{ and } \|f_i\|^2=r_i \text{ for all } i\}
\]
is the set of all frames with frame operator conjugate to $S$ and squared frame norms given by the $r_i$. 

In particular, performing reduction in stages yields
\[
	\widetilde{\mathcal F}_S(\vec{r})/\left( \mathrm{U}(k) \times \mathrm{U}(1)^{N-1}\right) = \C^{k \times N}\sslash_{\mathcal{O}_{\left(S,-\frac{1}{2}\vec{r}\right)}} \mathrm{U}(k) \times \mathrm{U}(1)^{N-1} = \left( \C^{k \times N} \sslash_{\mathcal{O}_S} \mathrm{U}(k)\right)\sslash_{-\frac{1}{2}\vec{r}} \mathrm{U}(1)^{N-1}.
\]
The inner reduction will turn out to be a generalization of a Grassmannian called a flag manifold, which is compact and connected, so connectedness of the full reduction -- and hence of $\widetilde{\mathcal F}_S(\vec{r})$ -- will again follow from Atiyah's connectedness theorem. 

The goals of this section, then, are: (i) to identify the regular values of $\mu$; (ii) to identify $\C^{k \times N} \sslash_{\mathcal{O}_S} \mathrm{U}(k)$ as a flag manifold and to observe that it is compact and connected; and (iii) to see that connectedness of $\widetilde{\mathcal F}_S(\vec{r})$ implies $\mathcal{F}_S(\vec{r})$ is connected.

\subsection{Regular Values of $\mu$}\label{sec:regular_values}
Since $\mu:\C^{k \times N} \to \mathcal{H}(k) \times \R^{N-1}$ is a product map, its regular values will be those points $(S,\vec{\xi}) \in \mathcal{H}(k) \times \R^{N-1}$ so that $S$ is a regular value of $\mu_{\mathrm{U}(k)}$ and $\vec{\xi}$ is a regular value of $\mu_{\mathrm{U}(1)^{N-1}}$.

In turn, $\mu_{\mathrm{U}(1)^{N-1}}: \C^{k \times N} \to \R^{N-1}$ is itself a product map, so understanding its regular values boils down to understanding the regular values of the map $\mu_{\mathrm{U}(1)}: \C^k \to \mathfrak{u}(1)^\ast \approx \R$ considered in the proof of Proposition~\ref{prop:torus moment}. The image of this map is the nonpositive reals, so zero is clearly a critical value of this map, and any $\xi < 0$ is regular since, for any $f \in \C^k$ and $X \in T_f \C^k \approx \C^k$,
\[
	D_f \mu_{\mathrm{U}(1)}(X) = -\langle f,X\rangle.
\]
Thus, $D_f \mu_{\mathrm{U}(1)}$ is the zero map if and only if $f =\vec{0}$, which happens if and only if $\mu_{\mathrm{U}(1)}(f)=0$.

On the other hand, consider the map $\mu_{\mathrm{U}(k)}:\C^{k \times N} \to \mathcal{H}(k)$.

\begin{lem}\label{lem:surjective}
For $F \in \C^{k \times N}$, the map $D_F\mu_{\mathrm{U}(k)}:T_F \C^{k \times N} \approx \C^{k \times N} \rightarrow T_{FF^\ast}\mathcal{H}(k) \approx \mathcal{H}(k)$ is surjective if and only if $F$ is full rank; equivalently, if and only if $F$ is a frame.
\end{lem}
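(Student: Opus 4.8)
The plan is to compute the image of $D_F\mu_{\mathrm{U}(k)}$ directly, using the formula $D_F\mu_{\mathrm{U}(k)}(X) = FX^\ast + XF^\ast$ established in the previous proposition, and to show that this image is all of $\mathcal{H}(k)$ precisely when $F$ has rank $k$. The equivalence with $F$ being a frame is immediate: a collection of $N$ vectors in $\C^k$ satisfies the frame inequalities \eqref{eq:tight} if and only if the $f_j$ span $\C^k$, i.e.\ if and only if the $k \times N$ matrix $F$ has rank $k$ (full rank, since $k \leq N$ in the interesting range). So the whole content is the rank criterion for surjectivity of $D_F\mu_{\mathrm{U}(k)}$.

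First I would dispose of the easy direction. If $F$ is not full rank, then $FF^\ast$ is a positive semidefinite Hermitian matrix that is not positive definite, so it has a nontrivial kernel; pick a unit vector $v$ with $F^\ast v = 0$ (equivalently $FF^\ast v = 0$, $v$ in the kernel of $F^\ast$). Then for every $X$, $v^\ast(FX^\ast + XF^\ast)v = (F^\ast v)^\ast X^\ast v + v^\ast X (F^\ast v) = 0$, so the image of $D_F\mu_{\mathrm{U}(k)}$ misses every Hermitian matrix $A$ with $v^\ast A v \neq 0$ — in particular it misses $vv^\ast$. Hence $D_F\mu_{\mathrm{U}(k)}$ is not surjective.

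For the converse, suppose $F$ is full rank. I would first reduce to a normal form: since the statement "$D_F\mu_{\mathrm{U}(k)}$ is surjective" is invariant under the left $\mathrm{U}(k)$ action (because $\mu_{\mathrm{U}(k)}$ is equivariant and $\mathrm{U}(k)$ acts transitively on the relevant orbit, or simply by a direct change of variables $X \mapsto UX$, $A \mapsto UAU^\ast$) and also behaves predictably under the right $\mathrm{GL}$ action, it suffices to treat a convenient representative. Concretely, because $F$ is full rank its rows span, so after a right multiplication by a suitable invertible matrix (which does not affect the span of the rows, hence preserves full rank) we may assume $F = [\,\mathrm{Id}_k \mid 0\,]$, or even just work with the submatrix formed by $k$ linearly independent columns. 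Then I would exhibit, for an arbitrary $A \in \mathcal{H}(k)$, an explicit $X$ with $FX^\ast + XF^\ast = A$: writing $X = [\,X_1 \mid X_2\,]$ in the same block form, the equation becomes $X_1^\ast + X_1 = A$, which is solved by $X_1 = \tfrac12 A$ (and $X_2 = 0$). The only slightly delicate point is making the reduction to normal form rigorous: one must check that conjugating $F$ on the right by an element of $\mathrm{GL}_k(\C)$ (acting on $k$ chosen columns) transforms the map $D_F\mu_{\mathrm{U}(k)}$ by a linear isomorphism on the target, so that surjectivity is genuinely preserved; alternatively, one avoids the normal form entirely by arguing abstractly that the image of $X \mapsto FX^\ast + XF^\ast$ contains $FF^\ast \cdot (\text{anything})$ appropriately symmetrized. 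I expect the main (minor) obstacle to be organizing this surjectivity argument cleanly rather than by brute-force block computation — the cleanest route is probably: given $A \in \mathcal{H}(k)$, since $F$ is full rank the map $Y \mapsto FY$ from $\C^{N \times k}$ to $\C^{k \times k}$ is surjective, so choose $Y$ with $FY = \tfrac12 A$ and set $X = Y^\ast$; then $FX^\ast + XF^\ast = F Y + Y^\ast F^\ast = \tfrac12 A + \tfrac12 A^\ast = A$, using $A = A^\ast$. This handles the full-rank case in one line and is the version I would write up.
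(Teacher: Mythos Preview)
Your proposal is correct and, once you arrive at the ``cleanest route'' in the last paragraph, it is essentially the paper's own argument: the paper handles the non-full-rank case exactly as you do (showing $v^\ast D_F\mu_{\mathrm{U}(k)}(X)v=0$ for $v\in\ker F^\ast$), and for the full-rank case it takes $X=\tfrac12 W(F_R^{-1})^\ast$ using a right inverse $F_R^{-1}$, which is precisely your choice $X=Y^\ast$ with $FY=\tfrac12 A$. The normal-form discussion is an unnecessary detour---you should go straight to the one-line surjectivity argument you end with.
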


\begin{proof}
If $F$ is not full rank, then there is a nonzero vector $v \in \mathrm{ker}(F^\ast)$. Then for any $X \in \C^{k \times N}$,
$$
v^\ast D_F\mu_{\mathrm{U}(k)}(X) v = v^\ast F X^\ast v + v^\ast X F^\ast v = 0.
$$
Since $v^\ast \mathrm{Id}_k v = v^\ast v \neq 0$, we see that this implies $\mathrm{Id}_k \in \mathcal{H}(k)$ is not in the image of $D_F\mu_{\mathrm{U}(k)}$, which is therefore not surjective.

On the other hand, suppose that $F$ is full rank. To prove that $D_F\mu_{\mathrm{U}(k)}$ is onto, we choose an arbitrary $W \in \mathcal{H}(k)$ and need to show that the equation
\begin{equation}\label{eqn:deriviative_surjective}
D_F\mu_{\mathrm{U}(k)}(X) = F X^\ast + X F^\ast = W
\end{equation}
has a solution $X \in \C^{k \times N}$.  Since $F$ is full rank, it has a right inverse $F^{-1}_R$, and we take $X=\frac{1}{2}W \left(F^{-1}_R\right)^\ast$. Substituting this into the left hand side of \eqref{eqn:deriviative_surjective}, we obtain
$$
F \left(\frac{1}{2}W \left(F^{-1}_R\right)^\ast\right)^\ast + \frac{1}{2}W \left(F^{-1}_R\right)^\ast F^\ast = \frac{1}{2} F F^{-1}_R W^\ast + \frac{1}{2} W \left(F F^{-1}_R\right)^\ast =\frac{1}{2}\left(W^\ast + W \right) = W,
$$
where the last equality follows because $W$ is Hermitian.
\end{proof}

\begin{lem}\label{lem:regular_values}
The regular values of $\mu_{\mathrm{U}(k)}$ are exactly the invertible, positive-definite Hermitian matrices.
\end{lem}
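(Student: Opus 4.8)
The plan is to combine Lemma~\ref{lem:surjective} with the characterization of $\mu_{\mathrm{U}(k)}^{-1}(S)$ as a set of frames. Recall that $S = FF^\ast$ for some $F \in \C^{k \times N}$; since $FF^\ast$ is always positive-semidefinite Hermitian, the image of $\mu_{\mathrm{U}(k)}$ lies in the positive-semidefinite cone, so every regular value is automatically positive-semidefinite Hermitian. The real content is therefore to pin down which positive-semidefinite $S$ are regular, and the claim is: exactly the positive-\emph{definite} ones.

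First I would show that every positive-definite Hermitian $S$ is a regular value. By Lemma~\ref{lem:surjective}, $D_F\mu_{\mathrm{U}(k)}$ is surjective precisely when $F$ is full rank. So it suffices to observe that if $FF^\ast = S$ with $S$ invertible, then $F$ must have rank $k$: indeed $\mathrm{rank}(F) \geq \mathrm{rank}(FF^\ast) = \mathrm{rank}(S) = k$, and since $F$ is $k \times N$ this forces $F$ to be full rank. Hence every $F \in \mu_{\mathrm{U}(k)}^{-1}(S)$ is a point where the derivative is onto, which is exactly the statement that $S$ is a regular value. One should also note $\mu_{\mathrm{U}(k)}^{-1}(S)$ is nonempty for such $S$ (e.g.\ take $F = [\,S^{1/2} \mid 0\,]$), though strictly speaking a value with empty preimage is vacuously regular.

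Conversely, I would show that a positive-semidefinite $S$ that is \emph{not} invertible fails to be a regular value, by exhibiting a point $F \in \mu_{\mathrm{U}(k)}^{-1}(S)$ that is not full rank. Since $S$ is singular, $\mathrm{rank}(S) = r < k$; write $S = GG^\ast$ where $G$ is $k \times r$ of rank $r$ (a reduced square-root factorization), and set $F = [\,G \mid 0\,] \in \C^{k \times N}$, padding with $N - r$ zero columns (using $N \geq k > r$). Then $FF^\ast = GG^\ast = S$ but $\mathrm{rank}(F) = r < k$, so by Lemma~\ref{lem:surjective} the derivative $D_F\mu_{\mathrm{U}(k)}$ is not surjective at this $F$, and thus $S$ is a critical value. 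Combining the two directions gives exactly the stated characterization.

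The argument is essentially routine given Lemma~\ref{lem:surjective}; the only point requiring a little care is the rank bookkeeping in both directions --- specifically the inequality $\mathrm{rank}(F) \geq \mathrm{rank}(FF^\ast)$ (together with the trivial reverse inequality, giving equality) and the explicit construction of a rank-deficient preimage when $S$ is singular. If one wanted to be completely careful about the degenerate case $\mathrm{rank}(S) = 0$, i.e.\ $S = 0$, the same construction works with $F = 0$.
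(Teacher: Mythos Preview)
Your proof is correct and follows essentially the same strategy as the paper: both reduce the question to Lemma~\ref{lem:surjective} together with the relationship between the rank of $F$ and the invertibility of $FF^\ast$. The only minor difference is that the paper proves the biconditional ``$FF^\ast$ invertible $\Leftrightarrow$ $F$ full rank'' directly (via the identity $\langle FF^\ast w, w\rangle = \|F^\ast w\|^2$), which in particular shows that \emph{every} preimage of a singular $S$ is rank-deficient, whereas you construct a single rank-deficient preimage for the converse direction---either route suffices for the lemma.
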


\begin{proof}
We first note that any such matrix $S$ has a square root; i.e. a size $k \times k$ matrix $W$ with $W W^\ast = S$. We then construct a size $k \times N$ block matrix 
$$
F =  \begin{pmatrix} W & 0 \end{pmatrix},
$$
which satisfies $\mu_{\mathrm{U}(k)}(F)=F F^\ast = WW^\ast = S$. Thus, $\mu_{\mathrm{U}(k)}$ surjects onto the space of invertible, positive-definite Hermitian matrices. 

Now note that $FF^\ast = S$ is invertible if and only if $F$ is full rank. Indeed, it is clear that invertibility of $S$ implies that $F^\ast$ has trivial kernel. On the other hand, suppose that $F F^\ast$ has a nonzero vector $w$ in its kernel. Then
$$
0 = \left<FF^\ast w, w\right> = \left<F^\ast w, F^\ast w\right>
$$
implies that $w \in \mathrm{ker}(F^\ast)$, and $F$ is not full rank. Together with Lemma \ref{lem:surjective}, this completes the proof.
\end{proof}

Combining all the results of this section, we conclude:
\begin{prop}\label{prop:regular values}
	The regular values of $\mu: \C^{k \times N } \to \mathcal{H}(k) \times \R^{N-1}$ are exactly those $\left(S, \vec{\xi}\right)$ where $S$ is invertible and positive-definite and the entries of $\vec{\xi}$ are all negative.
\end{prop}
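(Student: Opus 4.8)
The plan is to exploit the fact that $\mu$ is the combined momentum map $(\mu_{\mathrm{U}(k)},\mu_{\mathrm{U}(1)^{N-1}})$, so that regularity of $\mu$ at a fiber point $F$ is governed by the behavior of the two factors at $FF^\ast\in\mathcal{H}(k)$ and at $\bigl(-\tfrac12\|f_1\|^2,\dots,-\tfrac12\|f_{N-1}\|^2\bigr)\in\R^{N-1}$. One of the two factors is already understood: Lemma~\ref{lem:regular_values} says the regular values of $\mu_{\mathrm{U}(k)}$ are exactly the invertible, positive-definite Hermitian matrices. So the remaining work is (a) to compute the regular values of $\mu_{\mathrm{U}(1)^{N-1}}$ and (b) to combine the two computations into a statement about $\mu$ itself (values outside the image of $\mu$ being regular for trivial reasons).

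For (a), note that $\mu_{\mathrm{U}(1)^{N-1}}\colon\C^{k\times N}\to\R^{N-1}$ depends only on the first $N-1$ columns and is the $(N-1)$-fold product of the single-circle momentum map $\mu_{\mathrm{U}(1)}\colon\C^k\to\R$, $f\mapsto-\tfrac12\|f\|^2$, from the proof of Proposition~\ref{prop:torus moment}. Its differential at $f$ is $X\mapsto-\mathrm{Re}\langle f,X\rangle$, which vanishes identically exactly when $f=\vec0$, i.e.\ exactly when $\mu_{\mathrm{U}(1)}(f)=0$; since the image of $\mu_{\mathrm{U}(1)}$ is $(-\infty,0]$, its regular values are precisely the negative reals. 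Hence $\vec\xi$ is a regular value of $\mu_{\mathrm{U}(1)^{N-1}}$ if and only if every entry of $\vec\xi$ is negative.

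Step (b) is where I expect the real difficulty to lie, because one cannot simply invoke ``the regular values of a product map are products of regular values'': $\mu_{\mathrm{U}(k)}$ and $\mu_{\mathrm{U}(1)^{N-1}}$ are differentiated on the \emph{same} copy of $\C^{k\times N}$, not on independent factors. Necessity of the two conditions is the easy half: if $S$ is singular or some $\xi_i\ge0$, then by Lemma~\ref{lem:surjective} (respectively by (a)) a point of the fiber has $D_F\mu_{\mathrm{U}(k)}$ (respectively $D_F\mu_{\mathrm{U}(1)^{N-1}}$), and hence $D_F\mu$, non-surjective. For sufficiency, fix $(S,\vec\xi)$ with $S$ invertible positive-definite and all $\xi_i<0$ and let $F\in\mu^{-1}(S,\vec\xi)$; then $FF^\ast=S$ invertible forces $F$ to be full rank, and $\|f_i\|^2=-2\xi_i>0$ forces $f_i\neq0$ for $i\le N-1$. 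Given a target $(W,\vec w)\in\mathcal{H}(k)\times\R^{N-1}$ I would first solve $D_F\mu_{\mathrm{U}(k)}(X_0)=W$ using the right-inverse construction of Lemma~\ref{lem:surjective}, then correct $X_0$ by some $Y\in\ker D_F\mu_{\mathrm{U}(k)}$ with $D_F\mu_{\mathrm{U}(1)^{N-1}}(X_0+Y)=\vec w$; the crux is thus to show $D_F\mu_{\mathrm{U}(1)^{N-1}}$ restricted to $\ker D_F\mu_{\mathrm{U}(k)}$ still surjects onto $\R^{N-1}$. This is the step that must be carried out with care, since it is here that the hypotheses $F$ full rank and $f_1,\dots,f_{N-1}\neq0$ have to be leveraged — and, as the case of a frame with repeated vectors suggests, one should double-check whether these hypotheses alone are enough or whether the correct statement requires the slightly stronger condition that the $\mathrm{U}(k)\times\mathrm{U}(1)^{N-1}$-action be locally free on the fiber (which is, in general, the exact criterion for a momentum-map value to be regular).
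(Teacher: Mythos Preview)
Your decomposition and your analysis of the two factor momentum maps match the paper's exactly. The paper, however, does not carry out your step~(b) at all: its entire argument is the sentence at the start of Section~\ref{sec:regular_values} that ``since $\mu$ is a product map, its regular values will be those points $(S,\vec\xi)$ so that $S$ is a regular value of $\mu_{\mathrm{U}(k)}$ and $\vec\xi$ is a regular value of $\mu_{\mathrm{U}(1)^{N-1}}$,'' after which the proposition is asserted by ``combining all the results of this section.'' So the paper simply takes for granted the step you correctly flag as the crux.

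Your hesitation is warranted, and your parenthetical about repeated frame vectors locates the obstruction precisely: the proposition as stated is false. Take $k=2$, $N=3$, and
\[
F=\begin{pmatrix}1&1&0\\0&0&1\end{pmatrix}.
\]
Then $S=FF^\ast=\mathrm{diag}(2,1)$ is invertible positive-definite and $\vec\xi=(-\tfrac12,-\tfrac12)$ has negative entries, yet the circle $\theta\mapsto\bigl(\mathrm{diag}(e^{-i\theta},1),\,(e^{i\theta},e^{i\theta})\bigr)\subset\mathrm{U}(2)\times\mathrm{U}(1)^{2}$ fixes $F$. By the standard momentum-map identity $(\mathrm{im}\,D_F\mu)^{\circ}=\mathfrak{g}_F$, the differential $D_F\mu$ has one-dimensional cokernel; concretely, its image is the hyperplane $\tfrac12 W_{11}+w_1+w_2=0$ in $\mathcal{H}(2)\times\R^2$. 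Thus $(S,\vec\xi)$ is a \emph{critical} value of $\mu$, so the step you were worried about --- surjectivity of $D_F\mu_{\mathrm{U}(1)^{N-1}}$ restricted to $\ker D_F\mu_{\mathrm{U}(k)}$ --- genuinely fails here. The correct pointwise criterion is exactly the one you name at the end: $D_F\mu$ is surjective if and only if the $\mathrm{U}(k)\times\mathrm{U}(1)^{N-1}$-action is locally free at $F$. This does not damage the Main Theorem, whose proof only uses that $S$ is a regular value of $\mu_{\mathrm{U}(k)}$ (Lemma~\ref{lem:regular_values}) to form the flag manifold, after which Atiyah's theorem requires no regularity hypothesis on the torus side.
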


\subsection{A flag manifold}

For each invertible, positive-definite $S\in \mathcal{H}(k)$, we say that a vector $\vec{r} = (r_1, \dots , r_N)$ with $r_i > 0$ is \emph{admissible} if $\widetilde{\mathcal F}_S(\vec{r})$ is nonempty. The admissible vectors of squared norms are completely characterized by the following theorem.

\begin{thm}[Casazza and Leon~\cite{casazza2010}]\label{thm:admissible}
A vector $\vec{r}=(r_1,\ldots,r_N)$ is an admissible vector of squared norms for $\mathcal{F}_S$, where $S$ is a prescribed frame operator with  eigenvalues $\lambda_1 \geq \lambda_2 \geq \cdots \geq \lambda_k > 0$, if and only if
$$
\sum_{j=1}^N r_j = \sum_{j=1}^k \lambda_j \;\; \mbox{ and } \;\; \sum_{j=1}^\ell r_j \leq \sum_{j=1}^\ell \lambda_j \; \forall \; 1 \leq \ell \leq k.
$$
\end{thm}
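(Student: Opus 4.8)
The plan is to reduce the statement to the classical Schur--Horn theorem on the possible diagonals of a Hermitian matrix with prescribed spectrum, by passing from a frame to its Gram matrix.

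\emph{The Gram matrix dictionary.} Given $F=\{f_j\}\in\mathcal F_S$ with $\|f_j\|^2=r_j$, consider the Gram matrix $G:=F^\ast F=T_FT_F^\ast\in\mathcal H(N)$. It is positive semidefinite, its diagonal entries are exactly the $r_j$, and since $F^\ast F$ and $FF^\ast$ have the same nonzero eigenvalues with multiplicity and $\operatorname{rank}(F)=\operatorname{rank}(S)=k$, the spectrum of $G$ is $(\lambda_1,\dots,\lambda_k,0,\dots,0)$. Conversely, every positive semidefinite $G\in\mathcal H(N)$ with this spectrum and with diagonal $(r_1,\dots,r_N)$ is the Gram matrix of some $F\in\mathcal F_S$: writing $G=U\operatorname{diag}(\lambda_1,\dots,\lambda_k,0,\dots,0)U^\ast$ with $U$ unitary, factor the diagonal matrix as $E^\ast E$ with $E=\bigl[\operatorname{diag}(\sqrt{\lambda_1},\dots,\sqrt{\lambda_k})\mid 0\bigr]\in\C^{k\times N}$, so that $F_0:=EU^\ast$ has $F_0^\ast F_0=G$ and $F_0F_0^\ast=\operatorname{diag}(\lambda_1,\dots,\lambda_k)$; then choosing a unitary $W$ with $S=W\operatorname{diag}(\lambda_1,\dots,\lambda_k)W^\ast$ and setting $F:=WF_0$ gives $FF^\ast=S$, $F^\ast F=G$ (hence the same column norms $r_j$), and $\operatorname{rank}(F)=\operatorname{rank}(G)=k$, so $F$ is a frame. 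Thus $\vec r$ is admissible for $\mathcal F_S$ if and only if $(r_1,\dots,r_N)$ is the diagonal of some Hermitian $N\times N$ matrix with spectrum $(\lambda_1,\dots,\lambda_k,0,\dots,0)$.

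\emph{Applying Schur--Horn.} The Schur--Horn theorem says a real vector is the diagonal of some Hermitian matrix with a given spectrum if and only if it is majorized by that spectrum. Since permuting the columns of a frame is a symmetry of $\mathcal F_S$, we may assume $r_1\geq\cdots\geq r_N$, and then majorization of $\vec r$ by $\vec\nu:=(\lambda_1,\dots,\lambda_k,0,\dots,0)$ (already in decreasing order) amounts to $\sum_{j=1}^N r_j=\sum_{j=1}^k\lambda_j$ together with $\sum_{j=1}^\ell r_j\leq\sum_{j=1}^{\min(\ell,k)}\lambda_j$ for all $1\leq\ell\leq N$. For $\ell>k$ the right-hand side equals $\sum_{j=1}^k\lambda_j$, so that inequality follows automatically from $r_j\geq 0$ and the equality of total sums; what survives is precisely the displayed pair of conditions, which proves the theorem.

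\emph{Where the work is.} The only substantive input is the Schur--Horn theorem itself, which I would cite rather than reprove; the rest is bookkeeping. The steps that deserve a little care are the converse half of the dictionary --- checking that the reconstructed $F$ has frame operator exactly $S$, the prescribed column norms, and (crucially) full rank, so that it really is a frame --- and the observation in the second step that the majorization inequalities for $\ell>k$ are vacuous, so that constraints only appear up to $\ell=k$. If one preferred a self-contained argument, necessity of the inequalities can be seen directly, since the $\ell$-th partial sum $\sum_{j=1}^\ell\|f_j\|^2$ equals $\operatorname{trace}(QF^\ast F)$ for the rank-$\ell$ coordinate projection $Q$, which is at most the sum of the $\ell$ largest eigenvalues of $F^\ast F$ (equivalently of $FF^\ast=S$) by Ky Fan's maximum principle; sufficiency can then be obtained by an explicit inductive construction peeling off one frame vector at a time. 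Routing through Schur--Horn is, however, the shortest path.
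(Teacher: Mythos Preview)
Your proposal is correct and follows exactly the route the paper indicates: the paper does not give a detailed proof but simply remarks that the theorem ``follows immediately from the classical Schur--Horn theorem,'' and your Gram-matrix dictionary is precisely the standard way to make that reduction explicit. Your write-up is more detailed than the paper's one-line justification, and the checks you flag (that the reconstructed $F$ has frame operator exactly $S$, has full rank, and that the majorization inequalities for $\ell>k$ are automatic) are the right places to be careful.
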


This theorem follows immediately from the classical Schur--Horn theorem \cite{horn1954doubly,schur1923uber}. A modern exposition of the Schur--Horn theorem using ideas from symplectic geometry appears in \cite{knutson2000symplectic}.

Hence for invertible, positive-definite $S\in \mathcal{H}(k)$ and admissible $\vec{r}$, Proposition~\ref{prop:regular values} and Theorem~\ref{thm:marsden_weinstein} imply that
\[
	\mu_{\mathrm{U}(k)}^{-1}\left(\mathcal{O}_S\right)/\mathrm{U}(k) = \C^{k \times N}\sslash_{\mathcal{O}_S} \mathrm{U}(k)
\]
is a symplectic manifold.

This space plays the same role as the Grassmannian $\Gr$ did in the previous section. The key fact about the Grassmannian was that it was compact and connected, and so our goal now is to identify this space and see that it is also compact and connected.

For an invertible, positive-definite Hermitian matrix $S \in \mathcal{H}(k)$ with $\ell$ distinct (necessarily real) eigenvalues $\lambda_1 > \lambda_2 > \cdots > \lambda_\ell > 0$ with multiplicities $k_1,k_2,\ldots,k_\ell$ adding to $k$, there is a diffeomorphism 
$$
\mathcal{O}_S \approx \mathrm{U}(k)/\left(\mathrm{U}(k_1) \times \mathrm{U}(k_2) \times \cdots \times \mathrm{U}(k_\ell)\right)
$$
(see \cite[Section II.1.d]{audin2012torus}). In other words, if $d_1=k_1$, $d_2=k_1+k_2$, $d_3=k_1+k_2+k_3, \ldots, d_\ell=k_1+k_2 + \cdots + k_\ell = k$, then $\mathcal{O}_S$ is the \emph{flag manifold}
$$
\Fl_k(d_1,\ldots , d_\ell) = \{(P_1,\ldots,P_\ell) \mid \mbox{$P_j$ is a $d_j$-dimensional subspace of $\C^k$ of such that $P_j \subset P_{j+1}$}\}.
$$
It follows that
$$
\dim \mathcal{O}_S = k^2 - k_1^2 -k_2^2 - \cdots - k_\ell^2.
$$
For a regular value $S$ of the momentum map $\mu_{\mathrm{U}(k)}$, then, $\C^{k \times N}\sslash_{\mathcal{O}_S} \mathrm{U}(k)$ is a smooth manifold of dimension $2k(N-k)+\dim\mathcal{O}_S$. 

Let $S$ be a Hermitian matrix with eigenvalue multiplicities as above. Then $S$ is diagonalizable by unitary matrices, and it follows easily that $\C^{k \times N}\sslash_{\mathcal{O}_S} \mathrm{U}(k)$ only depends on these multiplicities. That is, if $S'$ is a Hermitian matrix with the same multiplicities, then $\C^{k \times N}\sslash_{\mathcal{O}_S} \mathrm{U}(k) \approx \C^{k \times N}\sslash_{\mathcal{O}_{S'}} \mathrm{U}(k)$. This observation suggests that $\C^{k \times N}\sslash_{\mathcal{O}_S} \mathrm{U}(k)$ is itself a flag manifold, which we now prove.

\begin{prop}\label{prop:flags}
	With $S$, $k_i$, and $d_i$ as above, $\C^{k \times N}\sslash_{\mathcal{O}_S} \mathrm{U}(k) \approx \Fl_N(d_1, \ldots , d_\ell, N)$.
\end{prop}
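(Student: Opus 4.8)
The plan is to exhibit an explicit diffeomorphism between $\C^{k \times N}\sslash_{\mathcal{O}_S} \mathrm{U}(k)$ and the flag manifold $\Fl_N(d_1, \ldots , d_\ell, N)$, mirroring the argument that identified $\C^{k \times N}\sslash_{\frac{N}{k}\mathrm{Id}_k} \mathrm{U}(k)$ with the Grassmannian $\Gr$. First I would pick a convenient representative of $\mathcal{O}_S$, namely the diagonal matrix $\Lambda$ with the eigenvalue $\lambda_i$ repeated $k_i$ times, in blocks of decreasing size; since the reduction over a coadjoint orbit only depends on the orbit, I may as well compute $\mu_{\mathrm{U}(k)}^{-1}(\mathcal{O}_\Lambda)/\mathrm{U}(k)$. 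The key observation is that a point of $\mu_{\mathrm{U}(k)}^{-1}(\mathcal{O}_\Lambda)$ is an $F \in \C^{k \times N}$ with $FF^\ast$ unitarily conjugate to $\Lambda$; acting on the left by an appropriate $U \in \mathrm{U}(k)$, every $\mathrm{U}(k)$-orbit in the fiber has a representative with $FF^\ast = \Lambda$ exactly, and two such representatives differ by an element of the stabilizer $\mathrm{U}(k_1)\times\cdots\times\mathrm{U}(k_\ell)$ of $\Lambda$ under conjugation. So the reduction is $\{F : FF^\ast = \Lambda\}/(\mathrm{U}(k_1)\times\cdots\times\mathrm{U}(k_\ell))$.

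Next I would unwind the condition $FF^\ast = \Lambda$. Writing $F$ in horizontal blocks $F = \begin{pmatrix} F_1 \\ \vdots \\ F_\ell \end{pmatrix}$ with $F_i \in \C^{k_i \times N}$, the equation says the rows of $F$ are pairwise orthogonal, the rows in block $i$ all have squared norm $\lambda_i$, and rows from different blocks are orthogonal. Equivalently, the row space of $F_i$ is a $k_i$-dimensional subspace $V_i \subset \C^N$, the $V_i$ are mutually orthogonal, and $F_i$ is $\sqrt{\lambda_i}$ times a unitary frame for $V_i$; quotienting by $\mathrm{U}(k_i)$ precisely forgets that unitary frame and remembers only $V_i$. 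Thus the reduction is the space of ordered $\ell$-tuples $(V_1, \ldots, V_\ell)$ of mutually orthogonal subspaces of $\C^N$ with $\dim V_i = k_i$. Finally I would turn this into a flag: setting $P_j = V_1 \oplus \cdots \oplus V_j$ gives a nested chain $P_1 \subset P_2 \subset \cdots \subset P_\ell \subset \C^N$ with $\dim P_j = d_j$, and conversely $V_j = P_j \cap P_{j-1}^\perp$ recovers the tuple, so the map $(V_1,\ldots,V_\ell) \mapsto (P_1,\ldots,P_\ell)$ is a bijection onto $\Fl_N(d_1,\ldots,d_\ell)$; including the trivial top step $N$ just records $\C^N$ itself and is a harmless renormalization of notation. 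Checking that this bijection and its inverse are smooth is routine: both directions are built from the smooth operations of taking row spans, orthogonal sums, and orthogonal complements, which are smooth on the relevant (full-rank) loci.

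The main obstacle is the careful bookkeeping in the second step: verifying that conjugating $FF^\ast$ back to $\Lambda$ exactly (rather than merely to something in $\mathcal{O}_\Lambda$) can be done consistently, and that the residual ambiguity is exactly the block-diagonal unitary group, so that the quotient genuinely strips off the frame data within each eigenspace and leaves only the subspaces. I expect everything else---the passage from orthogonal tuples to flags, and the smoothness statements---to be formal. It is worth remarking at the end that this identification manifestly shows $\C^{k \times N}\sslash_{\mathcal{O}_S}\mathrm{U}(k)$ is compact and connected, since $\Fl_N(d_1,\ldots,d_\ell,N)$ is a homogeneous space $\mathrm{U}(N)/(\mathrm{U}(k_1)\times\cdots\times\mathrm{U}(k_\ell)\times\mathrm{U}(N-k))$ of the compact connected group $\mathrm{U}(N)$, which is exactly the property needed to feed into Atiyah's theorem in the next subsection.
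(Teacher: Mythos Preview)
Your argument is correct and complete, but it takes a different route from the paper's. The paper proceeds via the Gramian: it fixes a basepoint $F_0$ with $F_0 F_0^\ast = S$, sets $R = F_0^\ast F_0 \in \mathcal{H}(N)$, and defines a map $G:[F]\mapsto F^\ast F$ from the reduction into the $\mathrm{U}(N)$-coadjoint orbit $\widetilde{\mathcal{O}}_R$, which is recognized as $\Fl_N(d_1,\ldots,d_\ell,N)$ because $R$ has the nonzero eigenvalues of $S$ together with a zero eigenvalue of multiplicity $N-k$. Well-definedness and injectivity of $G$ are checked via the singular value decomposition, and surjectivity by writing any $P\in\widetilde{\mathcal{O}}_R$ as $(F_0 U)^\ast(F_0 U)$.

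By contrast, you use a slice argument: you normalize so that $FF^\ast$ equals the diagonal $\Lambda$ exactly, identify the residual ambiguity as the block-diagonal stabilizer $\mathrm{U}(k_1)\times\cdots\times\mathrm{U}(k_\ell)$, and then read the flag off directly from the mutually orthogonal row spaces of the eigenvalue blocks. Your approach makes the geometry of the flag completely explicit and avoids the SVD step; the paper's approach packages everything into the single $\mathrm{U}(N)$-equivariant map $F\mapsto F^\ast F$, which is tidy and dovetails with the general philosophy that coadjoint orbits are the canonical models for these reductions. Either way the compactness and connectedness you flag at the end are exactly what is needed downstream.
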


\begin{proof}
	Let $W$ be a square root of $S$, as in the proof of Lemma~\ref{lem:regular_values}, and let 
	$$
	F_0 = \begin{pmatrix} W & 0 \end{pmatrix} \in \C^{k \times N},
	$$
	 so that $F_0F_0^*=S$. Define $R = F_0^*F_0 \in \mathcal{H}(N)$ and let $\widetilde{\mathcal{O}}_R$ be its orbit under the coadjoint (conjugation) action of $\mathrm{U}(N)$ on $\mathcal{H}(N)$. 
	
	The matrix $R$ is rank $k$ and has the same nonzero eigenvalues as $S$. Since $S$ is invertible and hence does not have zero as an eigenvalue, the eigenvalue multiplicities of $R$ are $k_1, \ldots , k_\ell, N-k$, meaning that $\widetilde{\mathcal{O}}_R$ is a copy of the flag manifold $\Fl_N(d_1, \ldots , d_\ell, N)$.
	
	Now, for any $[F] \in \C^{k \times N}\sslash_{\mathcal{O}_S} \mathrm{U}(k)$, which we think of as the frames in $\widetilde{\mathcal F}_S$ which are unitarily equivalent to $F$, the Gramian $F^\ast F$ has the same spectrum as $R$, and so lies in $\widetilde{\mathcal{O}}_R$. Moreover, if $F_1, F_2$ represent the same class in $\C^{k \times N}\sslash_{\mathcal{O}_S} \mathrm{U}(k)$, then $F_2 = U F_1$ for some $U \in \mathrm{U}(k)$, and hence
	\[
		F_2^\ast F_2 = (U F_1)^\ast (UF_1) = F_1^\ast U^\ast U F_1 = F_1^\ast F_1.
	\]
	Therefore, $[F] \mapsto FF^*$ defines a smooth map $G:\C^{k \times N}\sslash_{\mathcal{O}_S} \mathrm{U}(k) \to \widetilde{\mathcal{O}}_R \approx \Fl_N(d_1, \ldots , d_\ell, N)$, which we claim is a diffeomorphism.
	
	To see that this map is injective, suppose $G([F_1]) = G([F_2])$ for $[F_1],[F_2] \in \C^{k \times N}\sslash_{\mathcal{O}_S} \mathrm{U}(k)$; i.e., $F_1^\ast F_1 = F_2^\ast F_2$. This implies $F_1$ and $F_2$ have the same right singular vectors as well as the same singular values. Also, since $F_1F_1^\ast$ and $F_2F_2^\ast$ are both conjugate to $S$, and hence to each other, by unitary matrices, the left singular vectors of $V_1$ and $V_2$ are related by a unitary transformation. But then simply writing out the singular value decompositions of $F_1$ and $F_2$ shows that $F_2 = U F_1$ for some $U \in U(k)$, and hence $[F_1] = [F_2]$.
	
	On the other hand, to see that $G$ is surjective, suppose $P \in \widetilde{\mathcal{O}}_R$. Then 
	\[
		P=U^*R U = U^\ast F_0^\ast F_0 U= (F_0 U)^\ast (F_0 U)
	\]
	for some $U \in \mathrm{U}(n)$. But then $[F_0 U] \in \C^{k \times N}\sslash_{\mathcal{O}_S} \mathrm{U}(k)$ since
	\[
		(F_0 U)(F_0 U)^\ast = F_0 U U^\ast F_0^\ast = F_0 F_)^\ast = S,
	\]
	so $P = G([F_0 U])$.
	
	We've now shown that $G$ is bijective, and the inverse map $P \mapsto [F_0 U]$ is clearly smooth, so this completes the proof.
\end{proof}

Notice, in particular, that the Grassmannian $\Gr = \Fl(k,N)$, so this generalizes the construction in the FUNTF case. 

Moreover, just like the Grassmannian, the flag manifold $\C^{k \times N}\sslash_{\mathcal{O}_S} \mathrm{U}(k) \approx \Fl_N(d_1, \ldots , d_\ell, N)$ inherits a natural symplectic structure from $\C^{k \times N}$ and the $\mathrm{U}(1)^{N-1}$ action on $\C^{k \times N}$ descends to the flag manifold and we have
\[
	\Fl(d_1,\dots , d_\ell,N)\sslash_{-\frac{1}{2}\vec{r}} \mathrm{U}(1)^{N-1} \approx \C^{k \times N}\sslash_{\mathcal{O}_{\left(S,-\frac{1}{2}\vec{r}\right)}} \mathrm{U}(k) \times \mathrm{U}(1)^{N-1}  = \widetilde{\mathcal F}_S(\vec{r})/\left( \mathrm{U}(k) \times \mathrm{U}(1)^{N-1}\right).
\]

The flag manifold $\Fl_N(d_1, \ldots , d_\ell, N) = \mathrm{U}(N)/\left(\mathrm{U}(k_1) \times \mathrm{U}(k_2) \times \cdots \times \mathrm{U}(k_\ell) \times \mathrm{U}(N-k)\right)$ is clearly connected, since both Lie groups involved in its definition are, so 
\[
	\Fl(d_1,\dots , d_\ell,N)\sslash_{-\frac{1}{2}\vec{r}} \mathrm{U}(1)^{N-1}=\mu_{\mathrm{U}(1)^{N-1}}^{-1}\left(-\frac{1}{2}\vec{r}\right)/\mathrm{U}(1)^{N-1}
\]
is connected (and hence so is $\widetilde{\mathcal F}_S(\vec{r})$) if and only if the level set $\mu_{\mathrm{U}(1)^{N-1}}^{-1}\left(-\frac{1}{2}\vec{r}\right)$ is.

\subsection{Proof of the Main Theorem}

We are now ready to prove our main theorem.

\begin{maintheorem}
For any frame operator $S$ and for any admissible vector of squared norms $\vec{r}$, the space $\mathcal{F}_S(\vec{r})$ is path-connected.
\end{maintheorem}

\begin{proof}
Fix a frame operator $S$ and an admissible vector of squared norms $\vec{r}$. The flag manifold $\Fl(d_1,\dots , d_\ell,N)$ is compact and connected, so Atiyah's Theorem (Theorem~\ref{thm:atiyah}) implies $\mu_{\mathrm{U}(1)^{N-1}}^{-1}\left(-\frac{1}{2}\vec{r}\right) \subset \C^{k \times N}\sslash_{\mathcal{O}_S} \mathrm{U}(k)$ is connected. In turn, we have just seen this implies $\widetilde{\mathcal F}_S(\vec{r})$ is connected. By the same algebraic set argument used in the FUNTF case, $\widetilde{\mathcal F}_S(\vec{r})$ is also path-connected. 

For any points  $F_0,F_1 \in \mathcal{F}_S(\vec{r}) \subset \widetilde{\mathcal F}_S(\vec{r})$, there is a continuous path $\widetilde{F}_t:[0,1] \rightarrow \widetilde{\mathcal F}_S(\vec{r})$ joining them. There exists a continuous path $U_t:[0,1] \rightarrow \mathrm{U}(k)$ with $\widetilde{F}_t \widetilde{F}_t^\ast = U_t S U_t^\ast$ and $U_0 = \mathrm{Id}_k = U_1$, so we can amend our original path to obtain $F_t:[0,1] \rightarrow \mathcal{F}_S(\vec{r})$ via the formula $F_t = U_t^\ast \widetilde{F}_t$, which ensures $F_t F_t^\ast = S$ for all~$t$. Since $U_t$ is unitary, this alteration of the path fixes the column norms for all $t$, so that $F_t$ is a path in $\mathcal{F}_S(\vec{r}) \subset \mathcal{F}_S$ connecting $F_0$ and $F_1$ and the theorem follows.
\end{proof}

\section{Discussion}\label{sec:discussion}

The symplectic approach to thinking about frames that we have introduced in this paper should be much more broadly applicable. There are several promising directions for further applications of symplectic ideas to important problems in frame theory. For example, since we have seen that the space of FUNTFs appears as a level set of the momentum map corresponding to the Hamiltonian $U(k) \times U(1)^{N-1}$ action on $\C^{k \times N}$, flowing along the negative gradient directions of the squared norm of the moment map~\cite{Lerman:2005ue} becomes a viable means of ``fixing up'' a frame which is nearly a FUNTF. This gives a new approach to attacking the Paulsen problem~\cite{Bodmann:2010ib,Casazza:2013bn2,Kwok:2018vd,Hamilton:2018ta} which we intend to take up in a future paper; see~\cite{Oberwolfach} for a brief introduction to the key ideas.

Symplectic geometry should also be relevant to the phase retrieval problem~\cite{Balan:2006bc,Candes:2013ka,Bandeira:2014it,Conca:2015kq,Vinzant:2015gu}, which can be cast in the following way: an unknown signal vector $v \in \C^k$ is mapped to $\C^N$ by the analysis operator from~\eqref{eqn:analysis} and then the result is fed into the momentum map~\eqref{eqn:torus moment map} of the Hamiltonian $U(1)^N$ action on $\C^N \approx \C^{1 \times N}$, and the problem is to invert this composite map up to a global phase ambiguity.  

A key feature of compact symplectic manifolds is that they have a natural probability measure (the \emph{Liouville measure}) induced by the symplectic volume form. In particular, in the case of K\"ahler manifolds, which all of the spaces under discussion are, the symplectic volume form and Riemannian volume form agree, so Liouville measure is the Riemannian measure. However, this measure is often much more accessible to symplectic than to Riemannian techniques. For example, (an open, dense subset of) the quotient ${ \mathcal{F}_{\frac{N}{k}\mathrm{Id}_k}(\vec{1})/(\mathrm{U}(k) \times \mathrm{U}(1)^{N-1}) \approx \mathrm{Gr}_k(\C^N)\sslash_{\left(-\frac{1}{2},\dots,-\frac{1}{2}\right)} \mathrm{U}(1)^{N-1}}$ of the space $\mathcal{F}_{\frac{N}{k}\mathrm{Id}_k}(\vec{1})$ of FUNTFs in $\C^k$ is toric~\cite{Flaschka:2005dq}, which means the Liouville measure has a simple structure~\cite{Cantarella:2016iy}. Building on work with equilateral polygons in $\R^3$~\cite{Cantarella:2016bt}, this leads to an explicit algorithm for sampling random FUNTFs in $\C^2$~\cite{ShonkwilerFrames}, but a similar algorithm should exist in all dimensions. Either experimentally or theoretically, it would be interesting to get estimates or bounds on the probability that FUNTFs have various nice properties.

Also, symplectic geometry can work well in infinite dimensions. The symplectic reduction operation was already applied to infinite-dimensional symplectic manifolds in Marsden and Weinstein's initial paper on the subject (see Examples 6 and 7 in \cite[Section 4]{marsden1974reduction}). Moreover, analogues of the Atiyah--Guillemin--Sternberg connectivity and convexity theorems have been extended to several infinite-dimensional settings \cite{harada2006connectivity, smith2014connectivity}. This suggests that the symplectic ideas developed here may also be relevant to the study of frames in infinite-dimensional Hilbert spaces. 

Finally, we caution that spaces of frames in $\R^k$ are generally not symplectic, though they should appear as Lagrangian submanifolds of the corresponding complex frame spaces. Hence, it is not obvious how to extend our main theorem or other symplectic arguments to real frame spaces. In fact, the direct translation of the statement of our main theorem cannot be true: it follows from work of Kapovich and Millson~\cite{kapovich1995moduli} that the space of tight frames in $\R^2$ with squared frame vector norms $(4,4,4,1,1,1)$ is not connected, so the correct statement of the real version of our theorem must necessarily be more complicated.

\section*{Acknowledgments} 
\label{sec:acknowledgments}

This project grew out of a conversation we had at the CMO--BIRS Workshop on the Geometry and Topology of Knotting and Entanglement in Proteins in November, 2017, and we would like to thank the organizers, BIRS, and the Casa Matem\'atica Oaxaca for a very stimulating workshop. Conversations we had with virtually all the participants at the Oberwolfach Mini-Workshop on Algebraic, Geometric, and Combinatorial Methods in Frame Theory in October, 2018 significantly refined our thinking and opened our eyes to broader applications of our symplectic ideas, so we would also like to thank the organizers and all the participants as well as the Mathematisches Forschungsinstitut Oberwolfach. We are very grateful for ongoing conversations about frames with various friends and colleagues, especially Jason Cantarella, Martin Ehler, Simon Foucart, Milena Hering, Emily King, Chris Manon, Dustin Mixon, Louis Scharf, and Nate Strawn. This work was supported by a grant from the Simons Foundation (\#354225, Clayton Shonkwiler).


\bibliographystyle{plain}

\bibliography{needham_bibliography}

\end{document}